\documentclass[12pt,a4paper]{amsart}
\usepackage{mathrsfs}

\newtheorem{theo+}              {Theorem}           [section]
\newtheorem{prop+}  [theo+]     {Proposition}
\newtheorem{coro+}  [theo+]     {Corollary}
\newtheorem{lemm+}  [theo+]     {Lemma}
\newtheorem{exam+}  [theo+]     {Example}
\newtheorem{rema+}  [theo+]     {Remark}
\newtheorem{defi+}  [theo+]     {Definition}

\newenvironment{theorem}{\begin{theo+}}{\end{theo+}}
\newenvironment{proposition}{\begin{prop+}}{\end{prop+}}
\newenvironment{corollary}{\begin{coro+}}{\end{coro+}}

\newenvironment{definition}{\begin{defi+}}{\end{defi+}}

\usepackage{amsthm}
\theoremstyle{plain} \theoremstyle{remark}
\newtheorem{remark}{Remark}
\newtheorem{example}{Example}

\def \r{\mbox{${\mathbb R}$}}
\def\E{/\kern-1.0em \equiv }

\evensidemargin  5mm \oddsidemargin  5mm \textwidth  145mm
\textheight 209mm

\linespread{1.1}

\thispagestyle{empty}
\title{Biharmonic conformal immersions into 3-dimensional manifolds}
\author{Ye-Lin Ou$^{*}$ }

\address{Department of Mathematics,\newline\indent
Texas A $\&$ M University-Commerce,\newline\indent Commerce, TX
75429, U S A.\newline\indent E-mail:yelin.ou@tamuc.edu}

\thanks{$^{*}$ Research supported by
NSF of Guangxi (P. R. China), 2011GXNSFA018127.}
\begin{document}

\title[Biharmonic conformal immersions into 3-dimensional manifolds]
{Biharmonic conformal immersions into 3-dimensional manifolds}

\subjclass{58E20} \keywords{biharmonic maps, biharmonic conformal
immersions, minimal surfaces, constant mean curvature surfaces.}
\thanks{}
\date{08/28/2012}

\maketitle
\section*{Abstract}
\begin{quote}
{\footnotesize  Motivated by the beautiful theory and the rich
applications of harmonic conformal immersions and conformal
immersions of constant mean curvature (CMC) surfaces, we study
biharmonic conformal immersions of surfaces into a generic
3-manifold. We first derive an invariant
equation for such immersions, we then try to answer the question,
``what surfaces can be biharmonically conformally immersed into Euclidean 3-space $\r^3$?"  We prove that a circular cylinder is the only CMC surface that can be biharmonically conformally immersed into $\r^3$; We obtain a
classification of biharmonic conformal immersions of complete CMC surfaces
into $\r^3$ and hyperbolic 3-spaces.  We  also study rotational surfaces that can be biharmonically
conformally immersed into $\r^3$ and prove that a circular cone can never be biharmonically
conformally immersed into $\r^3$.}
\end{quote}

\section{Introduction}

In this paper, all manifolds, maps, vector fields are assumed to be smooth and Einstein summation convention is used. \\

A biharmonic map is a map $\varphi:(M, g)\longrightarrow (N, h)$
between Riemannian manifolds that is a critical point of the
bienergy functional
\begin{equation}\nonumber
E_{2}\left(\varphi,\Omega \right)= \frac{1}{2} {\int}_{\Omega}
\left|\tau(\varphi) \right|^{2}{\rm d}x
\end{equation}
for every compact subset $\Omega$ of $M$, where $\tau(\varphi)={\rm
Trace}_{g}\nabla {\rm d} \varphi$ is the tension field of $\varphi$ vanishing of which means $\varphi$ is a harmonic map.
Biharmonic map equation is the Euler-Lagrange equation of this functional which can be written as (\cite{Ji1})
\begin{equation}\notag
\tau^{2}(\varphi):={\rm
Trace}_{g}(\nabla^{\varphi}\nabla^{\varphi}-\nabla^{\varphi}_{\nabla^{M}})\tau(\varphi)
- {\rm Trace}_{g} R^{N}({\rm d}\varphi, \tau(\varphi)){\rm d}\varphi
=0,
\end{equation}
where  $R^{N}$  denotes
the curvature operator of $(N, h)$ with the convention
$$R^{N}(X,Y)Z=
[\nabla^{N}_{X},\nabla^{N}_{Y}]Z-\nabla^{N}_{[X,Y]}Z.$$

{\em Biharmonic submanifolds} are referred to those submanifolds whose defining isometric immersions are giving by biharmonic maps. The notion of biharmonic maps is a natural generalization of that of harmonic maps and biharmonic submanifolds include minimal submanifolds as a subclass. It is well known that harmonic conformal immersions of surfaces  are exactly conformal minimal immersions of surfaces which have been in the focus of study for many decades and the rich theory of which has exhibited a beautiful interplay among geometry, topology and complex analysis (See \cite{CM}, \cite{Co}, \cite{Ke} and the references therein).\\

One motivation of this paper is to explore how far the beautiful theory, useful techniques, and important applications of the minimal surfaces can go in the following direction of generalization:

$$\{ \rm Minimal\; surfaces\; in\; \r^3\}=\{\rm Harmonic\; conformal\; immersions:\; M^2\longrightarrow \r^3\}$$ $$ \subset $$ $$\{\rm Biharmonic\; conformal\; immersions:\; M^2\longrightarrow \r^3\}$$\\

We say a hypersurface in a Riemannian manifold $(N^{m+1}, h)$
defined by an isometric immersion $\varphi: (M^{m},{\bar g})
\longrightarrow (N^{m+1},h)$ can be   biharmonically conformally
immersed into $(N^{m+1}, h)$, if there exists  a function $\lambda
:M^m\longrightarrow \r^{+}$ such that the conformal immersion
$\varphi: (M^{m}, \lambda^{-2}{\bar g}) \longrightarrow (N^{m+1},h)$  is a biharmonic map.\\

In this paper, we study biharmonic conformal immersions of surfaces into a generic 3-dimensional Riemannian manifold. After deriving an invariant
equation for such immersions, we attempt to answer the question, ``what surfaces can be biharmonically conformally immersed into Euclidean 3-space $\r^3$?"  Among other things, we prove that a circular cylinder is the only CMC surface that can be biharmonically conformally immersed into $\r^3$; We obtain a
classification of biharmonic conformal immersions of complete CMC surfaces
into $\r^3$ and hyperbolic 3-spaces.  For non-constant mean curvature surfaces, we  obtain conditions for rotational surfaces that can be biharmonically
conformally immersed into $\r^3$ and prove that a circular cone can never be biharmonically
conformally immersed into $\r^3$.

\section {Conformal biharmonic hypersurfaces}

Biharmonic hypersurfaces in a generic Riemannian manifold were studied in \cite{Ou1} where, among other things,  the following theorem was proved.
\begin{theorem}$($\cite{Ou1}$)$\label{MTH}
Let $\varphi:M^{m}\longrightarrow N^{m+1}$ be an isometric immersion
of codimension-one with mean curvature vector $\eta=H\xi$. Then
$\varphi$ is biharmonic if and only if:
\begin{equation}\notag
\begin{cases}
\Delta H-H |A|^{2}+H{\rm
Ric}^N(\xi,\xi)=0,\\
 2A\,({\rm grad}\,H) +\frac{m}{2} {\rm grad}\, H^2
-2\, H \,({\rm Ric}^N\,(\xi))^{\top}=0,
\end{cases}
\end{equation}
where ${\rm Ric}^N : T_qN\longrightarrow T_qN$ denotes the Ricci
operator of the ambient space defined by $\langle {\rm Ric}^N\, (Z),
W\rangle={\rm Ric}^N (Z, W)$,  $A$ is the shape operator of the
hypersurface with respect to the unit normal vector $\xi$, and $\Delta$ and ${\rm grad}$ denote the Laplace and the gradient operators defined by the induced metric on the hypersurface.
\end{theorem}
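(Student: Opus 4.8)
The plan is to substitute the tension field of an isometric immersion into the bitension field equation and separate the result into its components normal and tangent to $M$. Since $\varphi$ is isometric, $\tau(\varphi)={\rm Trace}_{g}\nabla{\rm d}\varphi=m\eta=mH\xi$, so that $\tau^{2}(\varphi)$ is obtained by applying the rough Laplacian ${\rm Trace}_{g}(\nabla^{\varphi}\nabla^{\varphi}-\nabla^{\varphi}_{\nabla^{M}})$ to the normal field $mH\xi$ and subtracting the curvature term $mH\,{\rm Trace}_{g}R^{N}({\rm d}\varphi,\xi){\rm d}\varphi$. The whole proof then reduces to evaluating these two pieces and reading off their $\xi$- and $TM$-components.

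First I would fix $p\in M$ and a local orthonormal frame $\{e_i\}$ that is geodesic at $p$, and record the Gauss and Weingarten formulas $\nabla^{N}_{e_i}e_j=\nabla^{M}_{e_i}e_j+\langle Ae_i,e_j\rangle\xi$ and $\nabla^{N}_{e_i}\xi=-Ae_i$. A single differentiation gives $\nabla^{\varphi}_{e_i}(H\xi)=(e_iH)\xi-HAe_i$; differentiating once more, tracing, and using Gauss again to split off the normal part yields
\begin{equation}\notag
{\rm Trace}_{g}(\nabla^{\varphi}\nabla^{\varphi}-\nabla^{\varphi}_{\nabla^{M}})(mH\xi)=m(\Delta H-H|A|^{2})\,\xi-2m\,A({\rm grad}\,H)-mH\,{\rm div}\,A,
\end{equation}
where ${\rm div}\,A=\sum_i(\nabla_{e_i}A)e_i$ and the $-H|A|^{2}$ term comes from the Weingarten contribution $\langle Ae_i,Ae_i\rangle$. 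The curvature term splits as $\sum_iR^{N}(e_i,\xi)e_i=-{\rm Ric}^{N}(\xi,\xi)\,\xi-({\rm Ric}^{N}\xi)^{\top}$, obtained purely from the symmetries of $R^{N}$ and the definition of the Ricci operator.

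The step that needs care is the tangential bookkeeping, and this is where the Codazzi equation enters. Computing $R^{N}(X,Y)\xi$ directly from Weingarten and Gauss shows $R^{N}(X,Y)\xi=(\nabla_Y A)X-(\nabla_X A)Y$; contracting this identity and using ${\rm tr}\,A=mH$ gives
\begin{equation}\notag
{\rm div}\,A=m\,{\rm grad}\,H-({\rm Ric}^{N}\xi)^{\top}.
\end{equation}
Substituting this, together with the curvature decomposition, into the expression for $\tau^{2}(\varphi)$ and collecting terms, the normal component becomes $m(\Delta H-H|A|^{2}+H{\rm Ric}^{N}(\xi,\xi))\xi$, giving the first equation, while the tangential component becomes $-m\big(2A({\rm grad}\,H)+\frac{m}{2}{\rm grad}\,H^{2}-2H({\rm Ric}^{N}\xi)^{\top}\big)$, giving the second.

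The main obstacle is precisely this coefficient matching. One must use ${\rm grad}\,H^{2}=2H\,{\rm grad}\,H$ to rewrite $m^{2}H\,{\rm grad}\,H$ as $\frac{m^{2}}{2}{\rm grad}\,H^{2}$, and one must observe that the two separate Ricci contributions — one descending from ${\rm div}\,A$ via Codazzi and one from the curvature term — \emph{add} rather than cancel, producing the doubled factor $2H({\rm Ric}^{N}\xi)^{\top}$. Getting these right forces a consistent choice of sign conventions for $R^{N}$ and for the shape operator $A$ throughout: a sign slip in the contracted Codazzi identity or in the Ricci decomposition is the easiest way to land on a wrong coefficient for $\frac{m}{2}{\rm grad}\,H^{2}$ or on the wrong multiple of the tangential Ricci term.
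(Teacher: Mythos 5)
Your proof is correct, and since the paper states this theorem without proof (it is quoted from \cite{Ou1}), there is nothing in-paper to diverge from: your route --- computing the rough Laplacian of $\tau(\varphi)=mH\xi$ via the Gauss and Weingarten formulas, decomposing the curvature term as $\sum_i R^{N}(e_i,\xi)e_i=-\mathrm{Ric}^{N}(\xi,\xi)\,\xi-(\mathrm{Ric}^{N}\xi)^{\top}$, and eliminating $\mathrm{div}\,A$ through the contracted Codazzi identity $\mathrm{div}\,A=m\,\mathrm{grad}\,H-(\mathrm{Ric}^{N}\xi)^{\top}$ --- is exactly the standard argument of the cited source. Your coefficient bookkeeping also checks out against the stated equations: the two tangential Ricci contributions do add to $2H(\mathrm{Ric}^{N}\xi)^{\top}$, and $m^{2}H\,\mathrm{grad}\,H=\frac{m^{2}}{2}\,\mathrm{grad}\,H^{2}$ yields the coefficient $\frac{m}{2}$ after factoring out $-m$, consistent with the paper's curvature and Ricci conventions (e.g.\ $\mathrm{Ric}^{N}(\xi,\xi)=2C$ in constant curvature $C$, as used in Corollary 2.5).
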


\begin{definition}\label{BCI}
A hypersurface in a Riemannian manifold $(N^{m+1}, h)$ defined by an isometric immersion $\varphi: (M^{m},{\bar g}) \longrightarrow (N^{m+1},h)$ is said to admit a {\bf biharmonic conformal immersion} into $(N^{m+1}, h)$, if there exists  a function $\lambda
:M^m\longrightarrow \r^{+}$ such that the conformal immersion
$\varphi: (M^{m}, \lambda^{-2}{\bar g}) \longrightarrow (N^{m+1},h)$ with conformal factor $\lambda$ is a biharmonic map. In such a case, we also say that the hypersurface
 $\varphi: (M^{m},{\bar g}) \longrightarrow (N^{m+1},h)$  can be {\bf biharmonically conformally immersed} into $(N^{m+1}, h)$
\end{definition}
\begin{remark}
(1) Clearly, for every biharmonic conformal immersion $\varphi: (M^{m}, {\bar g}) \longrightarrow (N^{m+1}, h)$ with $\varphi^{*}h=\lambda^2{\bar g}$, the associated hypersurface  $\varphi: (M^{m}, \varphi^{*}h) \longrightarrow (N^{m+1}, h)$ admits biharmonic conformal immersion into $(N^{m+1}, h)$ since $\lambda^{-2}(\varphi^{*}h)= {\bar g}$.\\
(2) It is well known that a surface is minimal if and only if its
defining isometric immersion $\varphi:M^2\longrightarrow (N^n,h)$ is
harmonic. Since harmonicity of a map from a surface is invariant
under  conformal changes  of the metric on the surface we conclude
that  for any positive function $\lambda$, the conformal immersion
$\varphi:(M^2, \lambda^{-2}\varphi^{*}h)\longrightarrow (N^n,h)$ is
again harmonic and hence trivially biharmonic. It follows from
Definition \ref{BCI} that a minimal surface
$\varphi:M^2\longrightarrow (N^n,h)$ can always be {\em trivially}
biharmonically conformally immersed in $(N^n,h)$. For this reason,
in the rest of this paper,  ``biharmonic conformal immersions of
surfaces" will always mean biharmonic conformal immersions of  {\bf
non-minimal}  surfaces.
\end{remark}

\begin{proposition}
A hypersurface $\varphi : (M^{m}, g) \longrightarrow (N^{m+1},h)$
with  mean curvature vector $\eta=H\xi$ with respect to the unit
normal vector field $\xi$ can be biharmonically conformally immersed
into $ (N^{m+1}, h)$  if and only if there exists a function
$\lambda : M \longrightarrow (0, \infty)$ such that
\begin{eqnarray}\notag
\lambda^{4}\tau^{2}(\varphi, g)&=& -(m-2)J^{ \varphi}_{\bar g}({\rm
d}{\varphi}({\rm grad}_{\bar g}\,\ln\lambda)) + 2m\lambda^2(-\Delta_{\bar g} {\rm
ln}\lambda-2\left|{\rm grad}_{\bar g}\,\ln\lambda\right|_{\bar g}^2)\eta\\\label{Confi}
&&+m(m-6)\lambda^2\nabla^{ \varphi}_{{\rm grad}_{\bar g}\,\ln\,\lambda}\,
\eta.
\end{eqnarray}
\end{proposition}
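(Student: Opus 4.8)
The plan is to obtain the statement from the conformal transformation law for the bitension field, specialised to an isometric immersion, and then to impose biharmonicity. By Definition \ref{BCI} the hypersurface admits a biharmonic conformal immersion precisely when there is a positive function $\lambda$ with $\tau^{2}(\varphi,\lambda^{-2}\bar g)=0$, where $\bar g$ is the induced metric and $\omega:=\ln\lambda$. So the whole task is to compute $\tau^{2}(\varphi,\lambda^{-2}\bar g)$ in terms of $\bar g$-data together with $\omega$ and its derivatives, and then set it to zero. I would first record the transformation of the tension field: for $\tilde g=e^{2\rho}\bar g$ on an $m$-manifold one has $\tau(\varphi,\tilde g)=e^{-2\rho}(\tau(\varphi,\bar g)+(m-2)\,{\rm d}\varphi({\rm grad}_{\bar g}\rho))$. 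Taking $\rho=-\omega$ and using the isometric-immersion identity $\tau(\varphi,\bar g)=m\eta$, the new tension field is $\tau(\varphi,\lambda^{-2}\bar g)=\lambda^{2}W$ with $W:=m\eta-(m-2)\,{\rm d}\varphi({\rm grad}_{\bar g}\omega)$.

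Next I would transform the two constituents of $\tau^{2}$ separately. The curvature term is immediate: it is linear in the tension field and $\tilde g^{ij}=\lambda^{2}\bar g^{ij}$, so ${\rm Trace}_{\tilde g}R^{N}({\rm d}\varphi,\lambda^{2}W){\rm d}\varphi=\lambda^{4}\,{\rm Trace}_{\bar g}R^{N}({\rm d}\varphi,W){\rm d}\varphi$. For the rough Laplacian $\Delta^{\varphi}_{\tilde g}:={\rm Trace}_{\tilde g}(\nabla^{\varphi}\nabla^{\varphi}-\nabla^{\varphi}_{\tilde\nabla})$ I would use the change of the domain Levi-Civita connection, $\tilde\nabla_{X}Y=\nabla_{X}Y+(X\rho)Y+(Y\rho)X-\bar g(X,Y)\,{\rm grad}_{\bar g}\rho$; tracing against $\tilde g^{ij}$ shows that on a section $V$ of $\varphi^{-1}TN$ one gets $\Delta^{\varphi}_{\tilde g}V=\lambda^{2}(\Delta^{\varphi}_{\bar g}V-(m-2)\nabla^{\varphi}_{{\rm grad}_{\bar g}\omega}V)$. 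Applying this to $V=\lambda^{2}W$ and expanding by the Leibniz rules, with ${\rm grad}_{\bar g}\lambda^{2}=2\lambda^{2}{\rm grad}_{\bar g}\omega$ and $\Delta_{\bar g}\lambda^{2}=2\lambda^{2}(\Delta_{\bar g}\omega+2|{\rm grad}_{\bar g}\omega|^{2})$, yields an overall factor $\lambda^{4}$ times a sum of terms in $\Delta^{\varphi}_{\bar g}W$, $\Delta_{\bar g}\omega\,W$, $|{\rm grad}_{\bar g}\omega|^{2}W$ and $\nabla^{\varphi}_{{\rm grad}_{\bar g}\omega}W$ with definite constant coefficients. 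As a check, for constant $\lambda$ only $\Delta^{\varphi}_{\bar g}W$ survives and the formula collapses to the homothety scaling $\tau^{2}(\varphi,c^{-2}\bar g)=c^{4}\tau^{2}(\varphi,\bar g)$.

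Finally I would assemble the pieces and recognise the invariant combinations. The part $\Delta^{\varphi}_{\bar g}W-{\rm Trace}_{\bar g}R^{N}({\rm d}\varphi,W){\rm d}\varphi$ is, up to sign, the Jacobi operator $J^{\varphi}_{\bar g}$ applied to $W$; splitting $W=m\eta-(m-2)\,{\rm d}\varphi({\rm grad}_{\bar g}\omega)$ and using that $J^{\varphi}_{\bar g}$ sends $m\eta=\tau(\varphi,\bar g)$ to $\pm\tau^{2}(\varphi,\bar g)$ peels off the background bitension field and leaves the term $-(m-2)J^{\varphi}_{\bar g}({\rm d}\varphi({\rm grad}_{\bar g}\omega))$. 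Substituting $W$ into the remaining lower-order terms, regrouping everything according to whether it is proportional to $\eta$, to ${\rm d}\varphi({\rm grad}_{\bar g}\omega)$, or to $J^{\varphi}_{\bar g}$, and then imposing $\tau^{2}(\varphi,\lambda^{-2}\bar g)=0$ and rearranging produces the asserted equation, with the constants $2m$ and $m(m-6)$ appearing on the $\eta$-terms; on a surface ($m=2$) every term carrying the factor $(m-2)$ drops out, which is the case that drives the rest of the paper. The main obstacle is precisely this last stage of bookkeeping: deriving the conformal transformation law for the rough Laplacian on $\varphi^{-1}TN$ correctly, and then retaining every Leibniz term with the right coefficient and sign so that $(m-2)$, $m(m-6)$ and $2m$ come out as claimed. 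Recognising the Jacobi-operator combination is what collapses the otherwise unwieldy expression into the compact stated form.
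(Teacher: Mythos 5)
Your route is sound and is, in substance, exactly the computation that the paper does \emph{not} carry out here: the paper's proof of this proposition is a one-line reduction --- set ${\bar g}=\lambda^{-2}g$, observe that $\varphi : (M^{m}, {\bar g})\longrightarrow (N^{m+1},h)$ is then a conformal immersion with $\varphi^{*}h=\lambda^{2}{\bar g}$, and quote Proposition 1 of \cite{Ou2}, where the transformation formula you are rederiving was established. Your ingredients are all correct: the law $\tau(\varphi,e^{2\rho}\bar g)=e^{-2\rho}\big(\tau(\varphi,\bar g)+(m-2)\,{\rm d}\varphi({\rm grad}_{\bar g}\,\rho)\big)$, the transformation $\Delta^{\varphi}_{\lambda^{-2}g}V=\lambda^{2}\big(\Delta^{\varphi}_{g}V-(m-2)\nabla^{\varphi}_{{\rm grad}_{g}\omega}V\big)$ of the rough Laplacian (with $\omega=\ln\lambda$), the $\lambda^{4}$-scaling of the curvature trace, and the homothety consistency check. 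Executed to the end, these do reproduce (\ref{Confi}), so your proposal amounts to a self-contained proof where the paper offers only a citation.

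There is, however, one concrete misstep that would derail your final regrouping as you have set it up: you declare ``$\bar g$ is the induced metric.'' In this proposition $g$ is the induced metric and ${\bar g}=\lambda^{-2}g$ is the \emph{conformal} metric (the paper fixes this in its proof; the notation of Definition \ref{BCI}, where $\bar g$ names the hypersurface metric, is admittedly a trap, but the usage in (\ref{GD12}) and (\ref{La}) confirms the conformal-metric reading). Hence every operator in (\ref{Confi}) --- $J^{\varphi}_{\bar g}$, ${\rm grad}_{\bar g}$, $\Delta_{\bar g}$, $|\cdot|_{\bar g}$ --- is taken with respect to $\lambda^{-2}g$, and this is not cosmetic. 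If you split $W=m\eta-(m-2)\,{\rm d}\varphi({\rm grad}_{g}\,\omega)$ and peel off $\tau^{2}(\varphi,g)$ using the \emph{induced-metric} Jacobi operator, as your last paragraph proposes, then for $m\neq 2$ you are left with the residue $-(m-2)\big[(6-m)\nabla^{\varphi}_{{\rm grad}_{g}\omega}\,{\rm d}\varphi({\rm grad}_{g}\,\omega)+2(\Delta_{g}\omega)\,{\rm d}\varphi({\rm grad}_{g}\,\omega)+(8-2m)|{\rm grad}_{g}\,\omega|^{2}_{g}\,{\rm d}\varphi({\rm grad}_{g}\,\omega)\big]$, which appears nowhere in (\ref{Confi}); these are precisely the correction terms that convert $\Delta^{\varphi}_{g}-{\rm Trace}_{g}R^{N}({\rm d}\varphi,\cdot){\rm d}\varphi$ into the ${\bar g}$-Jacobi operator applied to ${\rm d}\varphi({\rm grad}_{\bar g}\ln\lambda)$, via the same transformation law you already wrote down together with ${\rm grad}_{\bar g}\,\omega=\lambda^{2}{\rm grad}_{g}\,\omega$. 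The scalar coefficients likewise match only after rewriting $\Delta_{\bar g}\omega=\lambda^{2}\big(\Delta_{g}\omega-(m-2)|{\rm grad}_{g}\,\omega|^{2}_{g}\big)$ and $|{\rm grad}_{\bar g}\,\omega|^{2}_{\bar g}=\lambda^{2}|{\rm grad}_{g}\,\omega|^{2}_{g}$: one then finds the $\eta$-terms equal $\lambda^{4}\big(-2m\Delta_{g}\omega+(2m^{2}-8m)|{\rm grad}_{g}\,\omega|^{2}_{g}\big)\eta$, which is exactly $2m\lambda^{2}\big(-\Delta_{\bar g}\omega-2|{\rm grad}_{\bar g}\,\omega|^{2}_{\bar g}\big)\eta$. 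At $m=2$ every discrepancy vanishes --- which is why your reading is harmless for everything downstream in the paper --- but the proposition is stated for general $m$, and the regrouping step that you yourself flag as the main obstacle and leave unexecuted only closes once $\bar g$ is interpreted as the conformal metric.
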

\begin{proof}
Let ${\bar g}=\lambda^{-2}g$, one can easily see that map $\varphi :
(M^{m}, {\bar g}) \longrightarrow (N^{m+1}, h)$ becomes a conformal
immersion since $\varphi^{*}h=\lambda^2 {\bar g}$. The proposition
then follows from Proposition 1 in \cite{Ou2}.
\end{proof}

\begin{theorem}\label{MT10}
A conformal immersion
\begin{equation}\label{MAP1}
\varphi : (M^{2},{\bar g}) \longrightarrow
(N^3,h)
\end{equation}
 into a $3$-dimensional manifold with
$\varphi^{*}h=\lambda^{2}{\bar g}$ is biharmonic if and only if
\begin{equation}\label{M3}
\begin{cases}
\Delta H -H[|A|^2-{\rm Ric}^N(\xi,\xi)-\lambda^{-2}\Delta (\lambda^2)]+4g({\rm grad\;ln}
\lambda,{\rm grad} H)=0,\\A({\rm grad} H)+ H[{\rm grad}
H- \,({\rm Ric}^N\,(\xi))^{\top}+2\,A({\rm grad\;ln}
\lambda)]=0\\
\end{cases}
\end{equation}
where $\xi$, $A$, and $H$ are the unit normal vector field, the
shape operator, and the mean curvature function  of
the surface $\varphi(M)\subset (N^3, h)$ respectively, and the
operators $\Delta,\; {\rm grad}$ and $|,|$ are taken with respect to
the induced metric $g=\varphi^{*}h=\lambda^{2}{\bar g}$ on the
surface.
\end{theorem}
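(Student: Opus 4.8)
The plan is to obtain \eqref{M3} by specializing the conformal-immersion identity \eqref{Confi} to the surface case $m=2$. By the preceding Proposition, the conformal immersion $\varphi:(M^2,\bar g)\longrightarrow(N^3,h)$ with $\varphi^*h=\lambda^2\bar g$ is biharmonic if and only if \eqref{Confi} holds for this $\lambda$, with $g=\varphi^*h=\lambda^2\bar g$ the induced metric. The decisive simplification is that the coefficient $-(m-2)$ of the Jacobi-operator term $J^{\varphi}_{\bar g}(\mathrm{d}\varphi(\mathrm{grad}_{\bar g}\ln\lambda))$ vanishes when $m=2$; this deletes the only contribution to the right-hand side of \eqref{Confi} that is not manifestly a multiple of $\eta=H\xi$ or of $\nabla^{\varphi}\eta$, and it is exactly this vanishing that makes the surface case manageable. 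After setting $m=2$, the right-hand side of \eqref{Confi} collapses to a normal field $(\,\cdots)\eta$ together with the single derivative term $\nabla^{\varphi}_{\mathrm{grad}_{\bar g}\ln\lambda}\eta$.

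Next I would convert every operator taken with respect to $\bar g$ into the corresponding operator with respect to $g=\lambda^2\bar g$. Here I would use that gradients scale by $\mathrm{grad}_{\bar g}f=\lambda^2\,\mathrm{grad}_g f$, that the squared norm obeys $|\mathrm{grad}_{\bar g}\ln\lambda|_{\bar g}^2=\lambda^2|\mathrm{grad}_g\ln\lambda|_g^2$, and --- crucially in dimension two --- that the Laplacian is conformally covariant, $\Delta_{\bar g}\ln\lambda=\lambda^2\Delta_g\ln\lambda$. These identities clear the powers of $\lambda$ against the prefactor $\lambda^4$ on the left of \eqref{Confi}, leaving an equation for $\tau^{2}(\varphi,g)$ whose coefficients are expressed purely through $g$-quantities.

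I would then evaluate $\nabla^{\varphi}_{\mathrm{grad}_g\ln\lambda}\eta$ by writing $\eta=H\xi$ and invoking the Weingarten formula $\nabla^{\varphi}_X\xi=-A(X)$, the normal connection being trivial in codimension one since $\langle\nabla^{\varphi}_X\xi,\xi\rangle=0$. This splits the term into a normal part $g(\mathrm{grad}_g\ln\lambda,\mathrm{grad}_gH)\,\xi$ and a tangential part $-H\,A(\mathrm{grad}_g\ln\lambda)$. Simultaneously I would substitute the explicit components of $\tau^{2}(\varphi,g)$ supplied by Theorem \ref{MTH} with $m=2$: the normal scalar $\Delta H-H|A|^2+H\,\mathrm{Ric}^N(\xi,\xi)$ and the tangential field assembled from $A(\mathrm{grad}\,H)$, $\mathrm{grad}\,H^2$ and $(\mathrm{Ric}^N(\xi))^{\top}$. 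Equating the $\xi$-components on the two sides gives the first line of \eqref{M3} once the $\lambda$-terms are repackaged through $\lambda^{-2}\Delta(\lambda^2)=2\Delta\ln\lambda+4|\mathrm{grad}\ln\lambda|^2$, while equating the tangential components gives the second line after dividing out the common numerical factor.

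The main obstacle I anticipate is entirely bookkeeping: keeping the numerical coefficients and signs consistent through the conformal rescalings, the Weingarten splitting, and the final repackaging into the form $\lambda^{-2}\Delta(\lambda^2)$. In particular one must fix a single sign convention for the Laplacian and for the bitension field, and verify that the tangential component of $\tau^{2}(\varphi,g)$ enters \eqref{Confi} with the sign that produces the $+2A(\mathrm{grad}\ln\lambda)$ appearing in the second line of \eqref{M3} rather than its negative. Beyond this arithmetic no conceptual difficulty is expected, since the $m=2$ reduction already presents the two governing equations in cleanly separated normal and tangential form.
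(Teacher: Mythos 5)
Your proposal is correct and follows essentially the same route as the paper's own proof: specializing \eqref{Confi} to $m=2$ (so the Jacobi-operator term drops), converting $\bar g$-operators to $g$-operators via the two-dimensional conformal identities $\Delta_{\bar g}u=\lambda^2\Delta_g u$ and ${\rm grad}_{\bar g}u=\lambda^2{\rm grad}_g u$, splitting $\nabla^{\varphi}_{{\rm grad}\ln\lambda}(H\xi)$ by the Weingarten formula, substituting the bitension field from Theorem \ref{MTH}, and comparing normal and tangential components with the repackaging $\lambda^{-2}\Delta(\lambda^2)=2\Delta\ln\lambda+4|{\rm grad}\ln\lambda|^2$. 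All the scaling identities and signs you flag as the main bookkeeping risks are stated correctly, so no gap remains.
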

\begin{proof}
Let $\varphi : (M^{2}, \varphi^{*}h=\lambda^2{\bar g})
\longrightarrow (N^3, h)$ be the isometric immersion associated to
the conformal immersion (\ref{MAP1}). Then, the conformal immersion
(\ref{MAP1}) is biharmonic if and only if the associated surface can
be biharmonically conformally immersed into $(N^3, h)$ since
$\lambda^{-2}(\varphi^{*}h)=\bar g$. It follows from (\ref{Confi})
with $m=2$ that the conformal immersion $\varphi$ is biharmonic if
and only if
\begin{eqnarray}\label{GD12}
&&\lambda^{2}\tau^{2}(\varphi,g)=  -4(\Delta_{\bar g} {\rm
ln}\lambda+2{\left|{\rm grad}_{\bar g}\ln\lambda\right|_{\bar
g}}^2)\eta-8\nabla^{ \varphi}_{{\rm grad}_{\bar g}
\ln\,\lambda}\,\eta,
\end{eqnarray}
where $\tau^{2}(\varphi,g)$ denotes the bitension field of the
associated isometric immersion $\varphi :
(M^{2},g=\lambda^{2}\bar{g}) \longrightarrow (N^3, h)$ with mean
curvature vector $\eta=H\xi$, where $\xi$ and $H$ are the unit
normal vector field and the mean curvature function of the surface
$\varphi (M)$ respectively. Using the formula for $\tau^2 (\varphi, g)$ given in \cite{Ou1}  with $m=2$, we
have
\begin{eqnarray}\notag
\tau^{2}(\varphi,g) &=& 2\Big[\Delta H -H\,|A|^2+H{\rm
Ric}^N(\xi,\xi)\Big]\xi \\\notag &-& 2\Big[ 2A({\rm grad} H)+ {\rm
grad} (H^2)-2\, H \,({\rm Ric}\,(\xi))^{\top}\Big].
\end{eqnarray}
Substituting this into (\ref{GD12}) we have
\begin{eqnarray}\label{gd20}
&&\lambda^{2}\big[\Delta H -H\,|A|^2+H{\rm Ric}^N(\xi,\xi)\big]\xi
\\\notag&&- \lambda^{2}[ 2A({\rm grad} H)+ {\rm grad} (H^2)-2\, H \,({\rm Ric}\,(\xi))^{\top}\Big]\\\notag
&=& -2(\Delta_{\bar g} {\rm ln}\lambda+2{\left|{\rm grad}_{\bar
g}\ln\lambda\right|_{\bar g}}^2)\eta-4\nabla^{ \varphi}_{{\rm
grad}_{\bar g} \ln\,\lambda}\,H\xi.
\end{eqnarray}

On the other hand, it is easy to check that the transformations of Laplacian and the gradient
operators under a conformal change of metrics $g=\lambda^{2}\bar{g}$
on a two-dimensional manifold are given by
\begin{eqnarray}\label{La}
\Delta_{\bar{g}} u=\lambda^{2}\Delta u,\;\;\;{\rm
grad}_{\bar{g}}u=\lambda^{2}{\rm grad}u.
\end{eqnarray}
Using these we have
\begin{eqnarray}\label{La1}
-4\nabla^{\varphi}_{{\rm grad}_{\bar{g}}\ln\,\lambda} H\xi=-4{\bar
g}({\rm grad}_{\bar{g}}\ln\lambda,{\rm grad}_{\bar{g}}H) \xi
+4H\,A({\rm grad}_{\bar{g}}\ln\lambda)\\\notag= -4\lambda^2g({\rm
grad}\ln\lambda,{\rm grad}H) \xi +4\lambda^2H\,A({\rm
grad}\ln\lambda)
\end{eqnarray}

Substituting (\ref{La}) and  (\ref{La1}) into (\ref{gd20}) yields
\begin{eqnarray}\notag
&&\big[\Delta H -H\,|A|^2 +H{\rm Ric}^N( \xi,\xi)\big]\xi
\\\notag&&- [ 2A({\rm grad} H)+
{\rm grad} (H^2)-2\, H \,({\rm Ric}\,(\xi))^{\top}]\\\notag &=&
-\lambda^{-2}(\Delta \lambda^2)H\xi-4g({\rm grad\,ln}\lambda,{\rm grad}H)
\xi +4H\,A({\rm grad\;ln}\lambda).
\end{eqnarray}
By comparing the tangential and normal parts of this vector equation we
obtain the theorem.
\end{proof}

\begin{corollary}\label{C2}
A conformal immersion $\varphi : (M^{2}, {\bar
g}) \longrightarrow (N^3(C),
h_0)$ into $3$-dimensional space of constant sectional curvature $C$
with $\varphi^{*}h_{0}=\lambda^{2}{\bar
g}$ is biharmonic if and only if
\begin{equation}\label{C}
\begin{cases}
\Delta H -H[|A|^2-2C-\lambda^{-2}\Delta (\lambda^2)]+4g({\rm
grad\;ln} \lambda,{\rm grad} H)=0,\\A({\rm grad} H)+ H[{\rm
grad} H+2\,A({\rm grad\;ln}
\lambda)]=0,\\
\end{cases}
\end{equation}
where $\xi$ is the unit normal vector field of the surface
$\varphi(M)\subset \mathbb{R}^3$ and $A$ and $H$ are the shape
operator and the mean curvature function of the surface
respectively, and the operators $\Delta,\; {\rm grad}$ and $|,|$ are
taken with respect to the induced metric
$g=\varphi^{*}h=\lambda^{2}{\bar g}$ on the surface.
\end{corollary}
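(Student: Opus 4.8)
The plan is to obtain this corollary as a direct specialization of Theorem~\ref{MT10}: I would take the ambient manifold $N^3$ to have constant sectional curvature $C$ and simply compute the two curvature quantities ${\rm Ric}^N(\xi,\xi)$ and the tangential projection $({\rm Ric}^N(\xi))^{\top}$ that appear in system~(\ref{M3}), then substitute. No new geometric input beyond the curvature of a space form is needed.

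First I would recall that on an $n$-dimensional manifold $N^n(C)$ of constant sectional curvature $C$ the curvature tensor has the form $R^N(X,Y)Z = C\,(h(Y,Z)X - h(X,Z)Y)$, so the Ricci tensor is ${\rm Ric}^N(X,Y) = (n-1)C\,h(X,Y)$ and the Ricci operator acts as scalar multiplication by $(n-1)C$. Specializing to $n=3$ gives ${\rm Ric}^N(X,Y) = 2C\,h(X,Y)$ and ${\rm Ric}^N(Z) = 2C\,Z$. Substituting into the first equation of~(\ref{M3}): since $\xi$ is a unit vector, ${\rm Ric}^N(\xi,\xi) = 2C$, so the bracket $|A|^2 - {\rm Ric}^N(\xi,\xi) - \lambda^{-2}\Delta(\lambda^2)$ becomes $|A|^2 - 2C - \lambda^{-2}\Delta(\lambda^2)$, which is exactly the first line of~(\ref{C}). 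For the second equation, ${\rm Ric}^N(\xi) = 2C\,\xi$ is purely normal to the hypersurface, so its tangential part $({\rm Ric}^N(\xi))^{\top}$ vanishes identically; dropping this term from the tangential equation of~(\ref{M3}) leaves precisely the second line of~(\ref{C}).

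Since every step is an algebraic substitution of the space-form curvature into an already-established system, I do not expect any genuine obstacle. The only points requiring minor care are bookkeeping the normalization factor $n-1 = 2$ in the Ricci tensor, and confirming that the normality of ${\rm Ric}^N(\xi)$ is what eliminates the tangential Ricci term entirely, leaving the clean form~(\ref{C}).
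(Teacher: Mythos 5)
Your proposal is correct and matches the paper's own proof, which likewise deduces the corollary from Theorem~\ref{MT10} using ${\rm Ric}^N(\xi,\xi)=2C$ and $({\rm Ric}^N(\xi))^{\top}=0$ for a $3$-dimensional space form. You merely spell out the space-form curvature computation (${\rm Ric}^N=(n-1)C\,h$ with $n=3$) that the paper states without derivation.
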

\begin{proof}
This follows from Theorem \ref{MT10} and the fact that a $3$-dimensional space of constant
sectional curvature $C$ is an Einstein manifold with ${\rm
Ric}^N(\xi,\xi)=2C$ and ${\rm Ric}^N\,(\xi))^{\top}=0$.
\end{proof}
\begin{remark}
When $C=0$, Corollary \ref{C2} recovers Theorem 2 in \cite{Ou2}
where the notation $|A|^2$ denotes the norm square of the second
fundamental form taken with respect to the conformal metric ${\bar
g}$ rather than the induced metric.
\end{remark}

Using Theorem \ref{MT10} and Definition \ref{BCI} we have
\begin{corollary}
A surface $\varphi:M^2\longrightarrow (N^3,h)$ with the induced metric $g=\varphi^*h$, the shape operator $A$, and the mean curvature function $H$ can be biharmonically conformally immersed into $ (N^3,h)$  if and only if there exists a positive function $\lambda$ defined on
$M^2$ that solves  Equation (\ref{M3}) .
\end{corollary}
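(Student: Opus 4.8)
The plan is to obtain this statement as an immediate combination of Definition \ref{BCI} and Theorem \ref{MT10}, with the only real work being a careful reconciliation of the two conformal-factor conventions. First I would unwind Definition \ref{BCI} applied to the surface $\varphi:(M^2,g)\to(N^3,h)$, taking the induced metric $g=\varphi^*h$ to play the role of the ``$\bar g$'' appearing in that definition: by definition, the surface admits a biharmonic conformal immersion precisely when there is a positive function $\lambda$ on $M^2$ for which the conformal immersion $\varphi:(M^2,\lambda^{-2}g)\to(N^3,h)$ is a biharmonic map.

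Next I would set $\tilde g=\lambda^{-2}g$, so that $\varphi^*h=g=\lambda^2\tilde g$. This is exactly the setup of Theorem \ref{MT10}, with the source metric $\tilde g$ playing the role of the ``$\bar g$'' in that theorem and with the same conformal factor $\lambda$. Applying Theorem \ref{MT10}, the conformal immersion $\varphi:(M^2,\tilde g)\to(N^3,h)$ is biharmonic if and only if the system (\ref{M3}) holds. The point to verify is that the geometric quantities appearing in (\ref{M3})---the unit normal $\xi$, the shape operator $A$, the mean curvature $H$, and the operators $\Delta$, ${\rm grad}$ and $|,|$---are all determined by the induced metric $g=\varphi^*h$ together with the image $\varphi(M)\subset(N^3,h)$, and are therefore unchanged whether we regard the domain metric as $g$ or as $\tilde g$. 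Indeed Theorem \ref{MT10} explicitly stipulates that these operators are taken with respect to $g=\varphi^*h=\lambda^2\bar g$, which is precisely our fixed induced metric.

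Chaining the two equivalences then gives the claim: the surface can be biharmonically conformally immersed into $(N^3,h)$ if and only if some positive $\lambda$ on $M^2$ makes $\varphi:(M^2,\lambda^{-2}g)\to(N^3,h)$ biharmonic, which by Theorem \ref{MT10} holds if and only if that same $\lambda$ solves Equation (\ref{M3}).

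The main obstacle is conceptual rather than computational: it is the bookkeeping of which metric the operators and invariants in (\ref{M3}) refer to. One must be careful that $\lambda$ enters Definition \ref{BCI} as the factor in $\lambda^{-2}g$ while entering Theorem \ref{MT10} through $\varphi^*h=\lambda^2\bar g$; I would emphasize that these are the same $\lambda$ and that the induced metric $g=\varphi^*h$ serves as the fixed reference throughout, so that no rescaling of $A$, $H$, or the differential operators is required when passing between the two formulations.
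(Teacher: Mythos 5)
Your proposal is correct and coincides with the paper's own (implicit) argument: the paper states the corollary as an immediate consequence of Definition \ref{BCI} and Theorem \ref{MT10}, which is exactly the chain of equivalences you give. Your explicit reconciliation of the two conventions---$\tilde g=\lambda^{-2}g$ so that $\varphi^{*}h=g=\lambda^{2}\tilde g$ with the same $\lambda$, and the observation that all quantities in (\ref{M3}) are taken with respect to the fixed induced metric $g$---is precisely the bookkeeping the paper leaves to the reader.
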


For CMC surfaces, we have
\begin{corollary}
A non-zero constant mean curvature surface
$\varphi:M^2\longrightarrow (N^3,h)$ with the induced metric
$g=\varphi^*h$ and the shape operator $A$ can be harmonically
conformally immersed into $ (N^3,h)$  if and only if and only if
there exists a positive function $\lambda$ defined on $M^2$ such
that
\begin{equation}\label{ContH}
\begin{cases}
\Delta (\lambda^2)=\lambda^{2}[|A|^2-{\rm Ric}^N(\xi,\xi)],\\A({\rm
grad\;ln} \lambda)= \frac{1}{2}({\rm Ric}^N\,(\xi))^{\top}
\end{cases}
\end{equation}
\end{corollary}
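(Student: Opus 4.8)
The plan is to obtain this as a direct specialization of Theorem \ref{MT10}, since a nonzero constant mean curvature surface is precisely one whose mean curvature function $H$ is a nonzero constant. The combination of Theorem \ref{MT10} and Definition \ref{BCI} already tells us that such a surface can be biharmonically conformally immersed into $(N^3,h)$ exactly when there exists a positive function $\lambda$ solving the system (\ref{M3}); so the whole task reduces to simplifying (\ref{M3}) under the hypothesis that $H$ is constant.

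First I would record the two consequences of $H$ being constant, namely $\Delta H=0$ and ${\rm grad}\,H=0$, which annihilate several terms in (\ref{M3}). Substituting these into the normal (first) equation of (\ref{M3}) leaves
\[
-H\bigl[\,|A|^2-{\rm Ric}^N(\xi,\xi)-\lambda^{-2}\Delta(\lambda^2)\,\bigr]=0,
\]
while substituting into the tangential (second) equation leaves
\[
H\bigl[\,-({\rm Ric}^N(\xi))^{\top}+2A({\rm grad\;ln}\lambda)\,\bigr]=0.
\]

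The key step is then to invoke the hypothesis $H\neq 0$: dividing both equations by the nonzero constant $H$ eliminates it. The first equation becomes $\lambda^{-2}\Delta(\lambda^2)=|A|^2-{\rm Ric}^N(\xi,\xi)$, which after multiplying through by $\lambda^2$ is precisely the first line of (\ref{ContH}); the second becomes $2A({\rm grad\;ln}\lambda)=({\rm Ric}^N(\xi))^{\top}$, which upon dividing by $2$ is the second line of (\ref{ContH}). Since every manipulation here is reversible, the two systems are equivalent, and this yields the stated if-and-only-if.

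I do not expect any genuine obstacle: the statement is a routine specialization and the argument is purely algebraic once constancy of $H$ is used to discard the $\Delta H$ and ${\rm grad}\,H$ terms. The single point worth flagging is that the non-vanishing of $H$ is essential, as it is exactly what licenses the division that removes $H$ from both equations; were $H$ permitted to vanish (the minimal case), the system would hold trivially, in agreement with the earlier remark that minimal surfaces are always trivially biharmonically conformally immersed.
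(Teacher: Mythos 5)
Your proof is correct and takes exactly the route the paper intends: the paper states this corollary without any explicit proof, as an immediate specialization of Theorem \ref{MT10}, and your computation---setting $\Delta H=0$ and ${\rm grad}\,H=0$ in (\ref{M3}) and then dividing both resulting equations by the nonzero constant $H$---is precisely that specialization, with every step reversible as you note. You also correctly treated ``harmonically'' in the statement as the paper's evident typo for ``biharmonically,'' which is the right reading since a surface of nonzero constant mean curvature can never admit a harmonic (i.e.\ minimal) conformal immersion.
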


It is well known (see e.g., \cite{CI}, \cite{Ji2}) that if a surface admits a biharmonic homothetic immersion into Euclidean 3-space $\r^3$, then it has to be a minimal surface. It was proved in \cite{Ou2} that a circular cylinder in $\r^3$ can be biharmonically conformally immersed into $\r^3$. As to the general question: what surfaces in $\r^3$ can be biharmonically conformally immersed into $\r^3$,  we have

\begin{corollary}
A surface $ M^2\longrightarrow\r^3$ with mean curvature function $H$ and the
shape operator $A$ can be biharmonically conformally immersed into
$\r^3$ if and only if there exists a positive function $\lambda$ defined on
$M^2$ such that
\begin{equation}\label{M310}
\begin{cases}
\Delta H -H[|A|^2-\lambda^{-2}\Delta (\lambda^2)]+4g({\rm grad\;ln}
\lambda,{\rm grad} H)=0,\\A({\rm grad} H)+ H[{\rm grad}
H+2\,A({\rm grad\;ln}
\lambda)]=0,\\
\end{cases}
\end{equation}
where  the Laplace operator $\Delta,\; {\rm grad}$ and $|,|$ are
taken with respect to
the induced metric $g=\varphi^{*}h_0$ on $M$.\\

In particular, a  surface $M^2$ in $\r^3$ of constant mean curvature $H\ne 0$ can be biharmonically conformally immersed into $\r^3$ if and only if there exists a positive function $\lambda$ defined on
$M^2$ such that
\begin{equation}\label{R3}
\begin{cases}
-|A|^2+ \lambda^{-2}\Delta\lambda^2=0,\\A({\rm grad\;ln}
\lambda)=0.\\
\end{cases}
\end{equation}
\end{corollary}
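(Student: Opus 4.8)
The plan is to obtain both halves of the statement as direct specializations of Corollary \ref{C2}. Since Euclidean $3$-space $\r^3$ is the space form $N^3(0)$ of constant sectional curvature $C=0$, the general equivalence (\ref{M310}) should follow immediately by setting $C=0$ in the system (\ref{C}): the term $-2C$ in the first equation vanishes, while the second equation already carries no curvature contribution, so (\ref{C}) becomes exactly (\ref{M310}). No computation beyond this substitution is needed for the first half, and I would simply record that $\varphi$ is biharmonic precisely when such a $\lambda$ solves (\ref{M310}).

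For the particular CMC case I would start from the already-established system (\ref{M310}) and impose that $H$ is a nonzero constant. Then $\mathrm{grad}\,H=0$, and consequently $\Delta H=0$, so every term involving a derivative of $H$ drops out. The first equation of (\ref{M310}) collapses to $-H[|A|^2-\lambda^{-2}\Delta(\lambda^2)]=0$, and the second to $H\cdot 2\,A(\mathrm{grad}\,\ln\lambda)=0$. Dividing both relations by the nowhere-vanishing factor $H$ yields precisely the system (\ref{R3}).

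I do not anticipate any serious obstacle: the analytic content is entirely packaged into Theorem \ref{MT10} and Corollary \ref{C2}, and what remains is bookkeeping at $m=2$ together with the vanishing of the Euclidean Ricci terms. The only point meriting a line of care is recording that a nonzero constant $H$ forces $\mathrm{grad}\,H=0$ and hence $\Delta H=0$ simultaneously, so that the reduction of the two equations is legitimate and the pointwise division by $H$ is valid on all of $M^2$; this is what justifies passing from (\ref{M310}) to the clean system (\ref{R3}).
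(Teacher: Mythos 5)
Your proposal is correct and matches the paper's (implicit) argument: the paper states this corollary without separate proof, as the immediate specialization $C=0$ of Corollary \ref{C2} combined with Definition \ref{BCI}, and the CMC reduction to (\ref{R3}) is exactly your bookkeeping step of setting ${\rm grad}\,H=0$, $\Delta H=0$ and dividing by the nonzero constant $H$. Nothing is missing.
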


By the nature of the PDE (\ref{M310}),  the question what surfaces can be biharmonically conformally immersed into $\r^3$ does not seem to be a simple one. Even in the family of the CMC surfaces, we know  that a plane (being totally geodesic and hence minimal) always admits trivial biharmonic conformal immersions, we also know (\cite{Ou2}) that  a circular cylinder can be biharmonically conformally immersed into $\r^3$. However,  as the following corollary shows,  not all CMC surfaces enjoy this property.

\begin{corollary}\label{S2}
No part of the standard sphere $S^2$ can be biharmonically
conformally immersed into $\mathbb{R}^3$.
\end{corollary}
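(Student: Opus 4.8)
The plan is to apply the CMC criterion (\ref{R3}) to a piece of the round sphere and derive a contradiction. First I would observe that any part of the standard sphere $S^2$ of radius $r$ is a surface of nonzero constant mean curvature (it is non-minimal, hence genuinely in the scope of the problem). By the second half of the preceding corollary, the sphere can be biharmonically conformally immersed into $\r^3$ if and only if there exists a positive function $\lambda$ on the surface solving the system (\ref{R3}), namely $-|A|^2+\lambda^{-2}\Delta(\lambda^2)=0$ together with $A({\rm grad\;ln}\lambda)=0$.

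Next I would exploit the crucial geometric feature of the sphere, that it is totally umbilical. Its shape operator is a nonzero constant multiple of the identity, $A=k\,{\rm Id}$ with $k\ne 0$, so that $|A|^2=2k^2>0$ is a positive constant. Substituting $A=k\,{\rm Id}$ into the tangential equation of (\ref{R3}) gives $k\,{\rm grad\;ln}\lambda=0$, and since $k\ne 0$ this forces ${\rm grad\;ln}\lambda=0$; hence $\lambda$ is constant on every connected piece of the surface.

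With $\lambda$ constant we have $\Delta(\lambda^2)=0$, and the normal equation of (\ref{R3}) then collapses to $|A|^2=0$. This contradicts $|A|^2=2k^2>0$, so no positive function $\lambda$ can solve (\ref{R3}) on any part of $S^2$, which proves the claim.

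There is no serious analytic obstacle here; the entire argument is driven by umbilicity, and I expect the whole proof to be short. The one point worth emphasizing is that it is precisely the tangential equation $A({\rm grad\;ln}\lambda)=0$, combined with the invertibility of the umbilical shape operator, that rigidly forces the conformal factor to be constant, after which the normal equation immediately rules out the sphere. By contrast, on a non-umbilical CMC surface such as the circular cylinder the shape operator has a nontrivial kernel, leaving room for a non-constant $\lambda$; this degeneracy is exactly what distinguishes the cylinder (which does admit such an immersion) from the sphere (which does not).
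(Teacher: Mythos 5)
Your proof is correct and follows essentially the same route as the paper: apply the CMC criterion (\ref{R3}), use the nondegeneracy of the sphere's shape operator in the tangential equation to force $\lambda$ constant, and then derive the contradiction $|A|^2=0$ from the normal equation. The only difference is cosmetic --- the paper establishes the invertibility of $A$ by an explicit coordinate computation on a parametrized patch, whereas you invoke total umbilicity ($A=k\,\mathrm{Id}$, $k\neq 0$) directly, which is a slightly cleaner, coordinate-free way of saying the same thing.
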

\begin{proof}
One can parametrize a piece of the unit sphere as $\phi: D\longrightarrow \mathbb{R}^3$, with $ \phi(u,v)=(\cos u \cos v, \cos u\sin v, \sin u)$, then, a straightforward computation yields the shape operator as
\begin{equation}\label{R4}
A= \left(\begin{array}{cc} 1 & 0  \\
0  & \cos^2 u
\end{array}\right).
\end{equation}
If there were a biharmonic conformal immersion of the unit sphere into $\r^3$. Then, Equation (\ref{R4}),  together with the second equation of (\ref{R3}), would imply that ${\rm grad\;ln}
\lambda=0$, and hence $\lambda={\rm constant}$. It follows from this and the first equation of  (\ref{R3}) that  $|A|^2=0$, which is clearly a contradiction to (\ref{R4}). Thus, we obtain the corollary.
\end{proof}
Our next theorem shows that circular cylinders are the only constant mean curvature surfaces that admit biharmonic conformal immersions into $\r^3$. To prove the theorem, we will need  the following theorem  which gives the existence of special isothermal coordinates on a nonzero constant mean curvature surface and the evidence of the existence of many CMC surfaces in $\r^3$.

{\bf Theorem A}  (see, e.g., \cite{Ke}, p.22):  Let
$M^2\longrightarrow\r^3$ be a surface of nonzero constant mean
curvature $H$, and let $p\in M$ be a non-umbilical point. Then there
exist isothermal coordinates $(u, v)$ in a neighborhood of $p$
satisfying
\begin{eqnarray}\label{CI}
I&=&\frac{e^{2w}}{2H}(du^2+dv^2),\\\label{CII}
II&=&e^w\cosh w\, du^2+e^w\sinh w\, dv^2,
\end{eqnarray}
where w=w(u,v) is a solution of the sinh-Gordon equation
\begin{equation}\notag
w_{uu}+w_{vv}+2H\cosh w \sinh w=0.
\end{equation}
Conversely, for any given positive constant $H$ and a solution $w$ of sinh-Gordon equation, there exists a CMC surface whose first and the second fundamental forms are given by (\ref{CI}), and (\ref{CII}).\\

Now, we are ready to prove the following theorem.
\begin{theorem}
A constant mean curvature surface can be biharmonically conformally immersed into $\r^3$ if and only if it is a part of a plane or a circular cylinder.
\end{theorem}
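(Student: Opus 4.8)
The plan is to prove both implications, splitting according to whether the constant mean curvature $H$ vanishes, with the real content in the ``only if'' direction for $H\neq 0$. For the ``if'' direction, a plane is totally geodesic, hence minimal, so by the Remark following Definition \ref{BCI} it trivially admits a biharmonic conformal immersion; for a circular cylinder the required positive conformal factor was already constructed in \cite{Ou2}, so this case may simply be quoted. Concretely, on the flat cylinder of radius $1/(2H)$ the shape operator is $A=\mathrm{diag}(2H,0)$ in the principal frame, so any $\lambda$ depending only on the ruling coordinate solves the second equation of (\ref{R3}), while the first reduces to a linear second-order ODE for $\lambda^2$ admitting positive solutions. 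The case $H=0$ is the minimal one, trivially immersible in the sense of the Remark, with the plane as its flat model.

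For the ``only if'' direction I would assume that a surface of nonzero constant mean curvature $H$ admits a biharmonic conformal immersion, so that some positive $\lambda$ satisfies (\ref{R3}), and aim to force the principal curvatures to be the constants $2H$ and $0$. First I would rule out $\mathrm{grad}\,\ln\lambda$ vanishing on any open set: where $\lambda$ is locally constant the first equation of (\ref{R3}) gives $|A|^2=0$, whence $\kappa_1=\kappa_2=0$ and $H=0$, a contradiction. Thus the open set $U=\{\,\mathrm{grad}\,\ln\lambda\neq 0\,\}$ is dense. On $U$ the second equation of (\ref{R3}) says that the nonzero vector $\mathrm{grad}\,\ln\lambda$ lies in the kernel of $A$, so $A$ has a zero eigenvalue and the Gaussian curvature $K=\kappa_1\kappa_2$ vanishes throughout $U$; since $K$ is continuous and the surface is connected, $K\equiv 0$ everywhere. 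Combined with $\kappa_1+\kappa_2=2H$, this yields $\{\kappa_1,\kappa_2\}=\{2H,0\}$ at every point, so the surface is free of umbilics.

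To identify the surface I would invoke Theorem A: since every point is now non-umbilical, the special isothermal coordinates are available around each point, and from (\ref{CI})--(\ref{CII}) the principal curvatures are $\kappa_1=H(1+e^{-2w})$ and $\kappa_2=H(1-e^{-2w})$. The vanishing of one principal curvature forces $w\equiv 0$, whereupon the fundamental forms become $I=\tfrac{1}{2H}(du^2+dv^2)$ and $II=du^2$, which are exactly those of a circular cylinder of radius $1/(2H)$; by Bonnet's fundamental theorem of surface theory the surface is an open part of that cylinder. Equivalently, a connected surface with both principal curvatures constant is an open piece of a plane, a sphere, or a circular cylinder, and $\kappa_1\neq 0=\kappa_2$ selects the cylinder.

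The main obstacle I anticipate is the global bookkeeping rather than any single computation: one must upgrade the pointwise, generic-set information on $U$ to a statement on the whole surface, establishing that $U$ is dense (not merely nonempty), checking that the points of $U$ are genuinely non-umbilical so that Theorem A applies there, and then using continuity of $K$ to propagate the conclusion $w\equiv 0$ across any a priori exceptional locus. Once the fundamental forms are pinned down, quoting Bonnet's theorem to name the surface is routine.
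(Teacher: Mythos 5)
Your proof is correct, and it takes a genuinely different route from the paper's. The paper splits at the outset into the totally umbilical case (plane or sphere, with the sphere excluded by Corollary \ref{S2}) and the non-umbilical case, where it substitutes the coordinate expression of the shape operator from Theorem A into the second equation of (\ref{R3}) and analyzes the resulting system $(\partial_1 \ln\lambda)\cosh w=0$, $(\partial_2 \ln\lambda)\sinh w=0$, killing the constant-$\lambda$ alternative by citing \cite{CI} (homothetic biharmonic immersions into $\r^3$ are minimal). You instead extract the coordinate-free content of (\ref{R3}) first: the first equation forbids $\lambda$ from being locally constant (your $|A|^2=0$ contradiction, which is exactly the mechanism of the paper's own proof of Corollary \ref{S2}), so ${\rm grad}\,\ln\lambda$ is a nonzero element of $\ker A$ on a dense open set, whence $K=\det A\equiv 0$ by continuity and the principal curvatures are $\{2H,0\}$ everywhere; Theorem A then serves only to read off $w\equiv 0$ (your formulas $\kappa_1=H(1+e^{-2w})$, $\kappa_2=H(1-e^{-2w})$ agree with the paper's computation of $A(\partial_1)$, $A(\partial_2)$) before applying Bonnet's theorem. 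Your route buys three things: it subsumes the sphere case so that neither Corollary \ref{S2} nor \cite{CI} is needed; it unifies the umbilical and non-umbilical branches; and it does honestly the open-set bookkeeping that the paper's pointwise dichotomy ``$\lambda$ constant, or $w=0$'' leaves implicit, since a priori the two alternatives in (\ref{GD31}) could hold on different subsets of the patch --- your density-plus-continuity argument is the clean repair. As you note, once both principal curvatures are known to be constant one could bypass Theorem A entirely via the classification of isoparametric surfaces in $\r^3$; the paper's version is more computational but self-contained in its special coordinates. One small caveat: your closing remark giving ``the plane as the flat model'' of the $H=0$ case glosses over non-planar minimal surfaces, but this matches the paper's own convention (the Remark following Definition \ref{BCI}) that biharmonic conformal immersions are understood to be of non-minimal surfaces, so the plane enters the statement only through the trivial case, exactly as in the paper's proof.
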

\begin{proof}
Let $\varphi: (M^2, \varphi^{*}h_0)\longrightarrow(\r^3, h_0)$ be a surface of constant mean curvature $H$. If the surface is totally umbilical, then it is well known that it is a part of a plane or a sphere. As we mentioned in the paragraph preceding Corollary \ref{S2} that a plane can always be (trivially) biharmonically conformally immersed in $\r^3$. On the other hand, we know from Corollary \ref{S2} that no part of a sphere can be biharmonically conformally immersed into $\r^3$.\\

If the surface is not totally umbilical, then, by Theorem A, we can choose local isothermal coordinates $(u,v)$ so that its first and second fundamental forms are given by (\ref{CI}) and (\ref{CII}). Thus, the coefficients of the first and the second fundamental forms are given by
\begin{eqnarray}\notag
g_{11}=  g_{22} =\frac{e^{2w}}{2H}, \;\;g_{12}=g_{21}=0;\;\;\;g^{11}=g^{22}=\frac{2H}{e^{2w}},\;\; g^{12}=g^{21}=0.
\end{eqnarray}
and
\begin{eqnarray}\notag
h_{11}=e^w \cosh w,\;\;\; h_{12}=h_{21}= 0, \;\;\;h_{22}=e^w\sinh w.
\end{eqnarray}
A straightforward computation yields
\begin{eqnarray}\notag
A(\partial_1)&=&g^{kl}h_{l1}\partial_k=2He^{-w}\cosh w \,\partial_1,\\\notag
A(\partial_2)&=&g^{kl}h_{l2}\partial_k=2He^{-w}\sinh w\, \partial_2,\\\notag
{\rm grad}({\rm ln} \lambda)&=&2He^{-2w}[(\partial_1 \ln \lambda)\partial_1+(\partial_2 \ln \lambda)\partial_2],\\\label{GD30}
A({\rm grad}({\rm ln} \lambda))&=&4H^2e^{-3w}[(\partial_1 \ln \lambda)\cosh w \,\partial_1+(\partial_2 \ln \lambda)\sinh w\,\partial_2].
\end{eqnarray}
Substituting (\ref{GD30}) into the second equation of (\ref{R3}) we
obtain
\begin{equation}\label{GD31}
\begin{cases}
4H^2e^{-3w}(\partial_1 \ln \lambda)\cosh w =0,\\
4H^2e^{-3w}(\partial_2 \ln \lambda)\sinh w=0.
\end{cases}
\end{equation}
Since $H\ne 0$ we solve (\ref{GD31})  to have $\lambda={\rm
constant}$, or $w=0$ and $\partial_1 \ln \lambda=0$. In the case of
$\lambda={\rm constant}$, the biharmonic conformal immersion
$\varphi: (M^2, \lambda^{-2}\varphi^{*}h_0)\longrightarrow(\r^3,
h_0)$ is actually a biharmonic homothetic immersion which is minimal
in $\r^3$ by a well known result in \cite{CI}. This contradicts to
the assumption that $H\ne 0$. In the case of $w=0$ and $\partial_1
\ln \lambda=0$,  it is not difficult to see from (\ref{CI}) and
(\ref{CII}) that the CMC surface is isometric to a circular
cylinder. Conversely, we know that a plane always admits trivial
biharmonic conformal immersion into $\r^3$. We also know from
\cite{Ou2} that a circular cylinder always admits a biharmonic
conformal immersion into $\r^3$. Summarizing the above results we
obtained the theorem.
\end{proof}

Our next theorem gives conditions for a rotational surface that can be biharmonically conformally immersed into $\r^3$.

\begin{theorem}\label{RS}
A non-minimal rotational surface $r(u, v)=(f(u)\cos v,\; f(u) \sin v,\; g(u))$ obtained by rotating the arclength parametrized curve $ (C): x=f(u), z=g(u)$ about $z$-axis can be biharmonically conformally immersed into $\r^3$ if and only if there exists a positive function $\lambda $ on the surface depending only on variable $u$ such that
\begin{eqnarray}\label{ROT}
&& k H'+HH'+2k H (\ln \lambda)'=0,\\
&& 2(\ln \lambda)''+2[\ln (fH^2)]'(\ln \lambda)'+4(\ln \lambda)'^2=\\\notag
&& k^2+(g'f^{-1})^2-H''H^{-1}-f'f^{-1}H'H^{-1},
\end{eqnarray}
where $k=-f''g'+g''f'$ is the curvature of the generating curve (C),
$H=[k+g'f^{-1}]/2$ is the mean curvature of the rotational surface.
\end{theorem}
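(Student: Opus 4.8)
The plan is to establish the equivalence by computing the geometric data of the rotational surface and substituting it into the characterization \eqref{M310}. First I would compute the fundamental quantities. With $r(u,v)=(f\cos v, f\sin v, g)$ and the curve $(C)$ arclength parametrized, so that $f'^2+g'^2=1$, the vectors $r_u,r_v$ give the induced metric $g=du^2+f^2\,dv^2$; note that the coefficient $g_{11}=f'^2+g'^2=1$ uses precisely the arclength condition. The unit normal comes out as $\xi=(-g'\cos v,-g'\sin v,f')$, and from $r_{uu},r_{uv},r_{vv}$ the second fundamental form is diagonal with $h_{11}=-f''g'+g''f'=k$, $h_{12}=0$, $h_{22}=fg'$. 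Raising an index yields the shape operator $A=\mathrm{diag}(k,\,g'f^{-1})$ in the basis $\{\partial_u,\partial_v\}$, hence $H=\tfrac12(k+g'f^{-1})$ and $|A|^2=k^2+(g'f^{-1})^2$, in agreement with the quantities named in the statement.

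Next, since $H$, $k$, $|A|^2$ and the sought conformal factor $\lambda$ all depend on $u$ alone, I would record how $\mathrm{grad}$ and $\Delta$ act on functions of $u$ for the warped metric $du^2+f^2\,dv^2$: for $\phi=\phi(u)$ one has $\mathrm{grad}\,\phi=\phi'\partial_u$ and $\Delta\phi=f^{-1}\partial_u(f\phi')=\phi''+(f'/f)\phi'$. Substituting $\mathrm{grad}\,H=H'\partial_u$, $A(\partial_u)=k\partial_u$ and $A(\mathrm{grad}\ln\lambda)=k(\ln\lambda)'\partial_u$ into the tangential (second) equation of \eqref{M310} collapses it to $kH'+HH'+2kH(\ln\lambda)'=0$, which is exactly the first equation of \eqref{ROT}.

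The normal (first) equation of \eqref{M310} requires one more manipulation. I would expand $\lambda^{-2}\Delta(\lambda^2)$ using the rule above together with the identities $\lambda'/\lambda=(\ln\lambda)'$ and $\lambda''/\lambda=(\ln\lambda)''+(\ln\lambda)'^2$, obtaining $\lambda^{-2}\Delta(\lambda^2)=2(\ln\lambda)''+4(\ln\lambda)'^2+2(f'/f)(\ln\lambda)'$. Feeding this, along with $\Delta H=H''+(f'/f)H'$, $|A|^2=k^2+(g'f^{-1})^2$ and $g(\mathrm{grad}\ln\lambda,\mathrm{grad}\,H)=(\ln\lambda)'H'$, into the normal equation and dividing through by $H$ produces the second equation of \eqref{ROT}; the one point to watch is recognizing the combination $2(f'/f)(\ln\lambda)'+4(H'/H)(\ln\lambda)'$ as $2[\ln(fH^2)]'(\ln\lambda)'$, using $[\ln(fH^2)]'=f'/f+2H'/H$. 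Since both equations of \eqref{ROT} are then term-by-term equivalent to the two equations of \eqref{M310} under the ansatz $\lambda=\lambda(u)$, the asserted equivalence follows in both directions.

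I expect the main obstacle to be bookkeeping rather than any conceptual difficulty: the division by $H$ (legitimate away from the zeros of $H$, which is where the \emph{non-minimal} hypothesis enters) and the clean identification of the logarithmic-derivative combination $[\ln(fH^2)]'$ are the places where sign or factor errors are easiest to make. A secondary point worth addressing is the reduction to a rotationally symmetric conformal factor: the characterization \eqref{M310} a priori permits $\lambda=\lambda(u,v)$, whereas the statement asserts a $\lambda$ depending on $u$ alone, so in the ``only if'' direction I would invoke the rotational symmetry of all the geometric data to justify restricting the search for $\lambda$ to this reduced class.
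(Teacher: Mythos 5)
Your computations are correct and are essentially the paper's: the same fundamental forms $g_{11}=1$, $g_{22}=f^2$, $h_{11}=k$, $h_{22}=fg'$, the same diagonal shape operator, the same reduction of the normal equation of \eqref{M310} via $\Delta H = H'' + f'f^{-1}H'$ and $\lambda^{-2}\Delta(\lambda^2)=2(\ln\lambda)''+2f'f^{-1}(\ln\lambda)'+4(\ln\lambda)'^2$, including the identification $2[\ln(fH^2)]'=2f'/f+4H'/H$. The one genuine gap sits exactly at the point you flagged as ``secondary'': the reduction to $\lambda=\lambda(u)$ in the ``only if'' direction. Invoking the rotational symmetry of the geometric data does not justify restricting the search to $v$-independent $\lambda$: the system \eqref{M310} is nonlinear in $\ln\lambda$ (through the $|{\rm grad}\,\ln\lambda|^2$-type terms), so symmetry of the coefficients neither implies that every solution is symmetric nor permits averaging a general solution $\lambda(u,v)$ into a symmetric one while preserving the solution property. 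As written, your argument establishes only the ``if'' direction and the equivalence \emph{within} the ansatz class $\lambda=\lambda(u)$, not that a general biharmonic conformal immersion forces such a $\lambda$ to exist.

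The paper closes this correctly and more simply, and the fix is forced by a computation you have already half done: substitute a \emph{general} $\lambda(u,v)$ into the tangential equation of \eqref{M310}. Since ${\rm grad}\,(\ln\lambda)=(\partial_1\ln\lambda)\partial_1+f^{-2}(\partial_2\ln\lambda)\partial_2$ and $A$ is diagonal, the tangential equation has two components: the $\partial_1$-component is your first equation of \eqref{ROT}, while the $\partial_2$-component reads $2Hg'f^{-3}(\partial_2\ln\lambda)=0$. Non-minimality gives $H\ne 0$, hence $\partial_2\ln\lambda=0$ wherever $g'\ne 0$ (and $g'\equiv 0$ on an open set would make the surface a piece of a plane, which is minimal and excluded), so \emph{every} solution of \eqref{M310} on such a surface automatically depends only on $u$ --- a stronger conclusion than any symmetrization could deliver. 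If you did want to salvage a symmetry argument, you would first have to linearize the system by passing to $\mu=\lambda^2$ and multiplying through by $\lambda^2/H$, after which averaging $\mu$ over the rotation group is legitimate; but the direct route above is what the paper does and is the step your write-up is missing. Everything else in your proposal coincides with the paper's proof.
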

\begin{proof}
A straightforward computation gives
\begin{eqnarray}\notag
r_u&=&(f'\cos v,\; f'\sin v, \;g'),\;\;\;r_v=(-f\sin v,\; f\cos v, \;0),\\\notag r_{uu}&=&(f''\cos v,\; f''\sin v, \;g''),\;\;\;\;r_{uv}=(-f'\sin v,\; f'\cos v, \;0),\\\notag
r_{vv}&=&(-f(u)\cos v,\; -f(u) \sin v,\; 0).
\end{eqnarray}
The unit normal vector field of the rotational surface  can be chosen to be $\xi=\frac{r_u\times r_v}{| r_u\times r_v|}=(-g'\cos v, -g'\sin v, f')$. A further computation gives the coefficients of the first fundamental form
\begin{eqnarray}\notag
g_{11}= \langle r_{u},  r_u\rangle =1, \;\;g_{12}=g_{21}=\langle r_{u},  r_v \rangle=0,\;\; g_{22}=\langle  r_v , r_v\rangle=f^2, \\\notag{\rm and\;hence}\;\;\;g^{11}=1,\;\; g^{12}=g^{21}=0,\;\; g^{22}=f^{-2}
\end{eqnarray}
  and the second fundamental form
\begin{eqnarray}\notag
h_{11}=\langle r_{uu}, \xi\rangle=-f''g'+g''f'=k,\;\;\; h_{12}=h_{21}= \langle r_{uv}, \xi\rangle=0, \;\;\;h_{22}=\langle r_{vv}, \xi\rangle=fg'.
\end{eqnarray}

Using the natural frame $\{\partial_1=r_u, \partial_2=r_v, \xi\}$ on $\r^3$ adapted to the rotational surface  we compute
\begin{eqnarray}\notag
A(\partial_1)&=&g^{ij}h_{j1}\partial_i=k \partial_1,\\\notag
A(\partial_2)&=&g^{ij}h_{j2}\partial_i=g'f^{-1}\partial_2,\\\notag
|A|^2&=&g^{kl}g^{ij}h_{ki}h_{jl}=k^{2}+(g'f^{-1})^2,\\\notag
H&=&\frac{1}{2}g^{ij}h_{ij}=(k+g'f^{-1})/2,\\\label{gH}
{\rm grad} H&=& g^{kl}\partial_k(H)\partial_l=H'\partial_1,\\\label{AgH}
A({\rm grad} H)&=&kH'\,\partial_1,\\\notag
{\rm grad}({\rm ln} \lambda)&=&(\partial_1 \ln \lambda)\partial_1+f^{-2}(\partial_2 \ln \lambda)\partial_2,\\\label{Agln}
A({\rm grad}({\rm ln} \lambda))&=&k(\partial_1 \ln \lambda)\partial_1+g'f^{-3}(\partial_2 \ln \lambda) \partial_2.
\end{eqnarray}
Substituting (\ref{gH}), (\ref{AgH}), and (\ref{Agln}) into the second equation of (\ref{M310}) we obtain
\begin{equation}\label{GD40}
\begin{cases}
kH'+HH'+2kH(\partial_1 \ln \lambda)=0,\\
2Hg'f^{-3}(\partial_2 \ln \lambda) =0.
\end{cases}
\end{equation}

Since the rotational surface is assumed to be non-minimal, we have $H\ne 0$. This, together with the second equation of (\ref{GD40}) implies that
\begin{equation}\notag
\partial_2 \ln \lambda=0,
\end{equation}
which means $\lambda$ depends only on the variable $u$.\\

In order to expand the first equation of (\ref{M310}) we compute
\begin{eqnarray}\label{DE1}
\Delta H&=& g^{ij}H_{ij}-g^{ij}\Gamma_{ij}^kH_k=H''-(\Gamma^1_{11}+\Gamma^1_{22})H'=H''+f'f^{-1}H',
\end{eqnarray}
where in obtaining the second equality we have used the fact that $H$ depends only on variable $u$. A similar computation yields
\begin{eqnarray}\label{DE2}
\lambda^{-2}\Delta\lambda^2 &=& 2\Delta(\ln \lambda) + 4|\rm grad \ln \lambda|^2\\\notag &=&2 (\ln \lambda)''+2f'f^{-1}(\ln \lambda)'+4(\ln \lambda)'^2.
\end{eqnarray}
Substituting (\ref{DE1}) and (\ref{DE2}) into the first equation of (\ref{M310}) and simplifying the result we obtain
\begin{eqnarray}\label{DE3}
&& 2(\ln \lambda)''+2[\ln (fH^2)]'(\ln \lambda)'+4(\ln \lambda)'^2=\\\notag
&& k^2+(g'f^{-1})^2-H''H^{-1}-f'f^{-1}H'H^{-1}.
\end{eqnarray}
Combining (\ref{GD40}) and (\ref{DE3}) we obtain the theorem.
\end{proof}

As an application of Theorem \ref{RS} we have
\begin{corollary}
A circular cone can never be biharmonically conformally immersed into $\r^3$.
\end{corollary}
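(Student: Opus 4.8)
The plan is to apply Theorem \ref{RS} directly, realizing the circular cone as a rotational surface whose generating curve $(C)$ is a straight line in the $xz$-half-plane. First I would set up the arclength parametrization: writing the half-angle of the cone as $\alpha\in(0,\pi/2)$, the generating line can be taken as $f(u)=u\sin\alpha$, $g(u)=u\cos\alpha$, which satisfies the arclength condition $f'^2+g'^2=1$ and has apex at the origin. (More generally one may take $f(u)=au+b$, $g(u)=cu+d$ with $a^2+c^2=1$ and $a,c\neq 0$; the conclusion is unaffected.)

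The key observation is that the generating curve of a cone is straight, so $f''=g''=0$ and hence the curvature of $(C)$ vanishes identically:
\[
k=-f''g'+g''f'=0.
\]
Consequently the mean curvature reduces to $H=\frac{1}{2}\,g'f^{-1}=\cos\alpha/(2u\sin\alpha)$, which is nowhere zero and genuinely non-constant in $u$. Thus the cone is a non-minimal rotational surface, and Theorem \ref{RS} legitimately applies to it.

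Next I would substitute $k=0$ into the first equation of (\ref{ROT}). The terms $kH'$ and $2kH(\ln\lambda)'$ both vanish, and the equation collapses to
\[
HH'=0.
\]
Since $H\neq 0$ and $H'=-\cos\alpha/(2u^2\sin\alpha)\neq 0$, this identity cannot hold for any choice of the conformal factor $\lambda$. Therefore no positive function $\lambda$ on the surface can satisfy the system of Theorem \ref{RS}, and the circular cone admits no biharmonic conformal immersion into $\r^3$.

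I do not expect any genuine obstacle here: once the vanishing of $k$ is noted, the first equation of (\ref{ROT}) alone produces a contradiction independent of $\lambda$, so the second (and far more involved) equation never needs to be examined. The only point requiring a little care is confirming that the cone is non-minimal, i.e.\ that $H\not\equiv 0$, so that Theorem \ref{RS} is applicable; this is immediate from $H=\cos\alpha/(2u\sin\alpha)$ with $\alpha\neq\pi/2$.
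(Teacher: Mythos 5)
Your proposal is correct and follows essentially the same route as the paper: both note that a straight generating line gives $k=0$, so the first (tangential) equation of (\ref{ROT}) collapses to $HH'=0$, forcing $H$ constant, which contradicts the non-constant mean curvature of the cone. Your version merely makes explicit the parametrization $f(u)=u\sin\alpha$, $g(u)=u\cos\alpha$ and the formulas $H=\cos\alpha/(2u\sin\alpha)$, $H'\neq 0$ that the paper leaves implicit, which is a reasonable bit of added rigor but not a different argument.
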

\begin{proof}
A circular cone is a rotational surface with the generating curve
being a straight line whose curvature $k=0$. It follows from
Equation of (\ref{ROT}) that if a circular cone admitted a
biharmonic conformal immersion into $\r^3$, then $H'=0$ and hence
its mean curvature  would be constant, which is clearly a
contradiction since the mean curvature of a circular cone is not
constant.
\end{proof}

Our next theorem gives a classification of biharmonic conformal immersions of complete constant mean curvature surfaces into $\r^3$.

\begin{theorem}\label{MT2}
Let $\varphi : (M^{2},{\bar g}) \longrightarrow (\r^3, h_0)$ be a
biharmonic conformal immersion of a surface into
$3$-dimensional Euclidean space with
$\varphi^{*}h_0=\lambda^{2}{\bar g}$ being complete and $\int_M\lambda^6 \,dv_{\bar
g}<\infty$.  If the surface $\varphi(M)\subset \r^3$ has constant
mean curvature, then the biharmonic conformal immersion $\varphi$ is
minimal.
\end{theorem}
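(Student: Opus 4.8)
The plan is to exploit the first equation of (\ref{R3}) to show that the conformal factor $\lambda^2$ is a subharmonic function on the complete manifold $(M,\bar g)$, and then to invoke a Liouville-type theorem to force $\lambda^2$ to be constant, which collapses the second fundamental form to zero. First I would argue by contradiction: if $H\equiv 0$ there is nothing to prove, so assume the constant mean curvature $H$ is nonzero, in which case the biharmonic conformal immersion is governed by the system (\ref{R3}). Its first equation reads $\Delta(\lambda^2)=\lambda^2|A|^2$, where $\Delta$ and $|A|$ are taken with respect to the induced metric $g=\varphi^{*}h_0=\lambda^{2}\bar g$.

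Since $g=\lambda^2\bar g$ on a surface, the conformal transformation rule (\ref{La}) gives $\Delta_{\bar g}(\lambda^2)=\lambda^2\,\Delta(\lambda^2)=\lambda^4|A|^2\ge 0$, so $u:=\lambda^2$ is a non-negative subharmonic function on $(M,\bar g)$. Moreover, because $H$ is a nonzero constant the principal curvatures satisfy $k_1+k_2=2H\neq 0$, whence $|A|^2=k_1^2+k_2^2\ge 2H^2>0$ everywhere; thus in fact $\Delta_{\bar g}u>0$ at every point. The care needed here is purely in the sign conventions: one must check that the Laplace--Beltrami operator used in (\ref{R3}) and (\ref{La}) yields $\Delta_{\bar g}(\lambda^2)\ge 0$ (subharmonic) rather than the reverse.

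Next I would bring in the completeness and integrability hypotheses. Since $\int_M\lambda^6\,dv_{\bar g}=\int_M u^{3}\,dv_{\bar g}<\infty$, the function $u=\lambda^2$ lies in $L^{3}(M,\bar g)$, and the exponent $3>1$. By the classical Liouville-type theorem of Yau --- a non-negative $L^{p}$ subharmonic function with $p>1$ on a complete Riemannian manifold must be constant --- applied to the complete manifold $(M,\bar g)$, we conclude that $u=\lambda^2$ is constant.

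Finally, $u$ constant gives $\Delta_{\bar g}u=0$, so $\lambda^4|A|^2=0$; since $\lambda>0$ this forces $|A|^2\equiv 0$, contradicting $|A|^2\ge 2H^2>0$. Hence $H\equiv 0$ and $\varphi$ is minimal. The one genuine input beyond routine conformal bookkeeping is the correct form of Yau's Liouville theorem together with the observation that the prescribed decay $\int_M\lambda^6\,dv_{\bar g}<\infty$ is exactly what places $\lambda^2$ in $L^{3}$; identifying this integrability exponent, and verifying that the relevant completeness is that of $\bar g$ (not of the induced metric $g$) so that Yau's theorem applies on the right manifold, is the main obstacle.
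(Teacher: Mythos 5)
Your overall strategy---read the first equation of (\ref{R3})/(\ref{M32}) as saying that $\lambda^2$ is a nonnegative subharmonic function, apply an $L^p$ Liouville theorem on a complete manifold to force $\lambda^2$ constant, then contradict $H\ne 0$---is the right one, and it is essentially the paper's. But you apply the Liouville theorem on the wrong manifold, and the completeness you need there is not among the hypotheses. The clause ``with $\varphi^{*}h_0=\lambda^{2}{\bar g}$ being complete'' asserts completeness of the \emph{induced} metric $g=\varphi^{*}h_0=\lambda^2\bar g$, not of $\bar g$; the paper's proof confirms this reading when it speaks of an $L^2$ solution of a Schr\"odinger-type equation ``on a complete Riemannian manifold $(M^2,g)$''. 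Your closing paragraph explicitly decides the opposite (``the relevant completeness is that of $\bar g$, not of the induced metric $g$''), so your invocation of Yau's theorem on $(M,\bar g)$ is unjustified: nothing prevents $(M,\bar g)$ from being incomplete. The paper's own example after Theorem \ref{MT2} illustrates this, with $\bar g=e^{-y/R}(dx^2+dy^2)$ on $\mathbb{R}^2$ incomplete (vertical rays $y\to\infty$ have finite $\bar g$-length) while $g=dx^2+dy^2$ is complete; in that example your hypothesis fails only through the integrability condition, not through completeness of $\bar g$.

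The gap is repairable, and the repair lands you essentially on the paper's argument. Stay on $(M,g)$: the first equation of (\ref{M32}) already reads $\Delta_g(\lambda^2)=|A|^2\lambda^2\ge 0$, so $\lambda^2$ is a nonnegative subharmonic function for the complete metric $g$; and since $dv_g=\lambda^2\,dv_{\bar g}$ in dimension two, the hypothesis gives $\int_M(\lambda^2)^2\,dv_g=\int_M\lambda^6\,dv_{\bar g}<\infty$, i.e.\ $\lambda^2\in L^2(M,g)$. Yau's $L^p$ Liouville theorem with $p=2>1$ (or, as the paper does, Lemma 3.1 of Nakauchi--Urakawa for $L^2$ solutions of $\Delta f=|A|^2 f$) then yields $\lambda^2$ constant, and your endgame goes through verbatim: constancy forces $|A|^2\equiv 0$, contradicting $|A|^2\ge 2H^2>0$. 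Your contradiction step is in fact slightly more self-contained than the paper's, which instead quotes the Chen--Ishikawa result that homothetic biharmonic immersions into $\mathbb{R}^3$ are minimal. Note, finally, that once you work on $(M,g)$ the natural integrability exponent is $2$, not $3$; your remark that the exponent $3$ over $(M,\bar g)$ is ``exactly'' what is needed stems from the same misidentification of which metric is complete.
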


\begin{proof}
By Corollary \ref{C2}, the equation for a biharmonic conformal immersion into $\r^3$ reduces to
\begin{equation}\label{M31}
\begin{cases}
\Delta H -H[|A|^2-\lambda^{-2}\Delta (\lambda^2)]+4g({\rm grad\;ln}
\lambda,{\rm grad} H)=0,\\A({\rm grad} H)+ H[{\rm grad}
H+2\,A({\rm grad\;ln}
\lambda)]=0\\
\end{cases}
\end{equation}
where  $A$ and $H$ are  the shape operator, and the mean curvature
function of the surface  $\varphi(M)\subset (\r^3, h_0)$
respectively, and the operators $\Delta,\; {\rm grad}$ and $|,|$ are
taken with respect to
the induced metric $g=\varphi^{*}h0=\lambda^{2}{\bar g}$ on $M$.\\

If the surface  $\varphi(M)\subset (\r^3, h_0)$ has constant mean
curvature $H=0$, then nothing is left to  prove. Otherwise, if
$H={\rm constant}\ne 0$, then Equation (\ref{M31}) reads
\begin{equation}\label{M32}
\begin{cases}
\Delta \lambda^2=|A|^2 \lambda^2,\\
A({\rm grad\;ln}
\lambda)=0.
\end{cases}
\end{equation}

Noting that the condition $\int_M\lambda^6 \,dv_{\bar g}<\infty$ and
the first equation in (\ref{M32}) means that $\lambda^2$ is an $L^2$
solution of the Schr$\ddot{\rm o}$dinger type equation on a complete
Riemannian manifold $(M^2, g)$ we use Lemma 3.1 in \cite{NU} to
conclude that $\lambda^2$ is a constant. Thus, the biharmonic
conformal immersion $\varphi$ is a actually a homothetic biharmonic
immersion into $\r^3$ which has to be a minimal immersion and hence
$H=0$, a contradiction. This completes the proof of the theorem.
\end{proof}

\begin{remark}
(1) We would like to point out that a similar argument can be used
to prove that Theorem \ref{MT2} remains true if the target space
$\r^3$
is replaced by any $3$-manifold with non-positive sectional curvature.\\
(2)  We also point out that as the following example shows the
condition $\int_M\lambda^6 dv_{\bar g}<\infty$ in Theorem \ref{MT2}
is sharp and cannot be dropped.
\end{remark}
\begin{example}
Let $\r^2$ be the Euclidean plane provided with the metric $\bar{
g}=e^{-y/R}(dx^2+dy^2$. The map $\varphi:(\r^2, {\bar
g})\longrightarrow \r^3$ given by $\varphi(x,y)=(R\cos\,\frac{x}{R},
R\sin\,\frac{x}{R}, y)$ is a biharmonic conformal immersion of
$\r^2$ into Euclidean space $\r^3$ with $\varphi^{*}h_0=e^{y/R}\bar{
g}$. The induced metric $g=e^{y/R} \bar{ g}$ on $\r^2$ is complete
and the surface $\varphi(\r^2)\subset \r^3$ is a circular cylinder
which has constant mean curvature. The biharmonic conformal
immersion $\varphi$  is not harmonic because $\int_M\lambda^6
dv_{\bar g}=\infty$.
\end{example}
To understand the example, we notice that in this case, $\varphi_x=(-\sin \frac{x}{R}, \cos
\frac{x}{R}, 0)$ and $\varphi_y=(0, 0, 1)$ so one can check that $\varphi$ is a
conformal immersion with $\varphi^{*}h_{0}=dx^2+dy^2=\lambda^2\bar{ g}$ for $\lambda^2=e^{y/R}$. It follows that the induced metric $g=\varphi^{*}h_{0}=dx^2+dy^2$ on $\r^2$ is the standard Euclidean metric which is
complete. One can easily check that $e_1=\varphi_x=(-\sin
\frac{x}{R}, \cos \frac{x}{R}, 0),\;\;e_2=\varphi_y=(0, 0, 1),
\;\;\; \xi=(\cos \frac{x}{R}, \sin \frac{x}{R}, 0)$ form an
orthonormal frame adapted to the surface. A straightforward
computation yields

\begin{eqnarray}\notag
Ae_1&=&-\frac{1}{R}e_1,\;\;Ae_2=0,\\\notag
H&=&\frac{1}{2}(\langle Ae_1,e_1\rangle+\langle
Ae_2,e_2\rangle)=-\frac{1}{2R}\ne 0\\\notag
|A|^2&=&\sum_{i=1}^2|Ae_i|^2=\frac{1}{R^2},\\\notag
{\rm grad} (\ln \lambda)&=&e_2(\ln \lambda)e_2.
\end{eqnarray}
It follows that
\begin{equation}\notag
\begin{cases}
\Delta (\lambda^2)=\Delta (e^{y/R})=\frac{\partial^2}{\partial y^2}(e^{y/R})=\frac{1}{R^2}(e^{y/R})=|A|^2\lambda^2,\\
A({\rm grad\;ln}
\lambda)=0,
\end{cases}
\end{equation}
which means Equation (\ref{M32}) holds. Thus, the conformal
immersion $\varphi$ is indeed a biharmonic map which is not harmonic
since $H\ne 0$. This does not contradict the conclusion of  Theorem
\ref{MT2} because for this example the condition that
$\int_M\lambda^6 dv_{\bar g}<\infty$ required by the theorem is not
satisfied. In fact, $\int_M\lambda^6 dv_{\bar
g}=\infty$.\\

In a very recent paper \cite{NUG}, Nakauchi, Urakawa and Gudmundsson prove that any biharmonic map with finite energy and finite bi-energy from a complete Riemannian manifold into a Riemannian manifold of non-positive sectional curvature  has to be harmonic. We can prove the following results about biharmonic conformal immersions which are the dual
 results  for biharmonic horizontally conformal submersions (Theorem 4.2 and Corollary 4.3) given in \cite{NUG}.
\begin{proposition}\label{PU}
Any biharmonic conformal immersion $\phi: (M^m, g)\longrightarrow (N^n, h)$ from a complete manifold into a nonpositively curved space with $\phi^*h=\lambda^2$ satisfying
\begin{eqnarray}\notag
\int_M\lambda^2dv_g<\infty,\; {\rm and}\\\notag
\int_M|m\lambda^2\eta+(2-m)d \phi({\rm grad \; ln }\lambda)|^2dv_g<\infty,
\end{eqnarray}
where $\eta$ denotes the mean curvature vector of the submanifold $\phi (M)\subset (N, h)$,
has to be harmonic. In particular, any biharmonic conformal immersion $\phi: (M^2, g)\longrightarrow (N^n, h)$ from a complete surface into a nonpositively curved space with $\phi^*h=\lambda^2$ satisfying
\begin{eqnarray}\label{EQ20}
\int_M\lambda^2dv_g<\infty,\; {\rm and}\\\label{EQ21}
\int_M\lambda^4|\eta|^2dv_g<\infty,
\end{eqnarray}
where $\eta$ denotes the mean curvature vector of the submanifold $\phi (M)\subset (N, h)$,
has to be minimal.
\end{proposition}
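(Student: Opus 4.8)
The plan is to recognize this proposition as a direct specialization of the theorem of Nakauchi--Urakawa--Gudmundsson quoted just above --- that a biharmonic map of finite energy and finite bienergy from a complete manifold into a nonpositively curved target is harmonic. All the real work lies in translating the two displayed integral hypotheses into the statements ``$\phi$ has finite energy'' and ``$\phi$ has finite bienergy,'' after which the conclusion is immediate. Throughout I write $g_{0}=\phi^{*}h=\lambda^{2}g$ for the induced metric, so that $\phi:(M^{m},g_{0})\longrightarrow (N^{n},h)$ is isometric with tension field $\tau_{g_{0}}(\phi)=m\eta$, where $\eta$ is the mean curvature vector of $\phi(M)\subset (N,h)$.

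First I would compute the energy density. Since $\phi$ is conformal with $\phi^{*}h=\lambda^{2}g$, one has $|d\phi|^{2}={\rm Trace}_{g}\,\phi^{*}h=m\lambda^{2}$, hence $E(\phi)=\tfrac{m}{2}\int_{M}\lambda^{2}\,dv_{g}$; thus the first hypothesis $\int_{M}\lambda^{2}\,dv_{g}<\infty$ says exactly that $\phi$ has finite energy. Next I would identify the tension field of $\phi$ with respect to $g$ via the classical conformal-change formula $\tau_{e^{2\sigma}g_{0}}(\phi)=e^{-2\sigma}\{\tau_{g_{0}}(\phi)+(m-2)d\phi({\rm grad}_{g_{0}}\sigma)\}$, applied with $e^{2\sigma}=\lambda^{-2}$, i.e. $\sigma=-\ln\lambda$. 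Using $\tau_{g_{0}}(\phi)=m\eta$ together with ${\rm grad}_{g_{0}}\ln\lambda=\lambda^{-2}{\rm grad}_{g}\ln\lambda$, this yields $\tau(\phi)=m\lambda^{2}\eta+(2-m)d\phi({\rm grad}_{g}\ln\lambda)$, so that $2E_{2}(\phi)=\int_{M}|\tau(\phi)|^{2}\,dv_{g}$ is precisely the second integral in the statement. Hence the second hypothesis says exactly that $\phi$ has finite bienergy.

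With $M$ complete, $N$ nonpositively curved, and both $E(\phi)$ and $E_{2}(\phi)$ finite, the cited result of \cite{NUG} forces $\tau(\phi)=0$, i.e. $\phi$ is harmonic, which proves the general assertion. For the surface case $m=2$ the correction term $(2-m)d\phi({\rm grad}_{g}\ln\lambda)$ vanishes, so $\tau(\phi)=2\lambda^{2}\eta$ and $|\tau(\phi)|^{2}=4\lambda^{4}|\eta|^{2}$; the hypotheses (\ref{EQ20})--(\ref{EQ21}) are again just finite energy and finite bienergy, and since a harmonic conformal immersion of a surface is minimal (as noted in the remark following Definition \ref{BCI}), $\phi$ is minimal.

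The only genuinely non-routine step is the identification of the tension field: getting the coefficient $m$, the sign of the $(2-m)$ term, and the powers of $\lambda$ correct when passing between $g$, $g_{0}$, and their gradients. Once the two hypotheses are recognized as finiteness of energy and of bienergy, the proposition follows at once. Should one prefer a self-contained proof in place of quoting \cite{NUG}, the substantive work would instead be the Nakauchi--Urakawa cutoff argument: test the biharmonic equation against $\rho_{R}^{2}\tau(\phi)$ for a family of cutoff functions $\rho_{R}$ to force $\nabla^{\phi}\tau(\phi)=0$ using finite bienergy, then integrate the identity ${\rm div}\,\langle d\phi(\cdot),\tau(\phi)\rangle^{\sharp}=|\tau(\phi)|^{2}$ against $\rho_{R}^{2}$ to force $\tau(\phi)=0$ using finite energy; there the delicate point is the sign $\langle {\rm Trace}_{g}\,R^{N}(d\phi,\tau(\phi))d\phi,\tau(\phi)\rangle\ge 0$, which comes from the nonpositivity of the sectional curvature under the stated convention for $R^{N}$.
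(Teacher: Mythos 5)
Your proposal is correct and follows essentially the same route as the paper: you compute the energy $\int_M m\lambda^2\,dv_g$, identify the tension field $\tau(\phi)=m\lambda^2\eta+(2-m)d\phi({\rm grad}\,\ln\lambda)$ so that the two integral hypotheses become finiteness of energy and bienergy, and then invoke the Nakauchi--Urakawa--Gudmundsson theorem (Theorem 2.3 in \cite{NUG}), exactly as the paper does. Your extra details --- the explicit conformal-change formula with the $\lambda^{-2}$ gradient conversion, and the sketched cutoff argument as a self-contained alternative --- go beyond what the paper records but do not change the argument.
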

\begin{proof}
For conformal immersion $\phi: (M^m, g)\longrightarrow (N^n, h)$
with $\phi^*h=\lambda^2 g$, one can easily compute its energy to
have
\begin{equation}\label{En1}
E(\phi)=\int_M|d \phi|^2dv_g=\int_Mg^{ij}\phi^{\alpha}_i\phi^{\beta}_jh_{\alpha\beta}dv_g=m\int_M\lambda^2dv_g.
\end{equation}
On the other hand, a straightforward computation one obtains the
tension field of the conformal immersion $\tau(\phi)=
m\lambda^2\eta+(2-m)d \phi({\rm grad \; ln }\lambda)$. It follows
that the bienergy of the conformal immersion is given by
\begin{eqnarray}\label{En2}
E_2(\phi)=\int_M|m\lambda^2\eta+(2-m)d \phi({\rm grad \; ln }\lambda)|^2dv_g.
\end{eqnarray}
Using Equations (\ref{En1}) and (\ref{En2}) and  Theorem 2.3 in \cite{NUG} we obtain the proposition.
\end{proof}
\begin{remark}
It is interesting to note that we can use Proposition \ref{PU} to
conclude that a biharmonic conformal immersion $\phi: (M^2, {\bar
g})\longrightarrow \r^3$  of a complete CMC surface into Euclidean
3-space with $\phi^*h=\lambda^2{\bar g}$ satisfying
\begin{eqnarray}\label{EQ20}
\int_M\lambda^2dv_{\bar g}&<&\infty,\; {\rm and}\\\label{EQ21}
\int_M\lambda^4 dv_{\bar g}&<&\infty
\end{eqnarray}
has to be minimal. On the other hand, we  can use Theorem \ref{MT2} to have the same conclusion with the assumption that $\int_M\lambda^6 dv_{\bar g}<\infty$. We also know that in general conditions (\ref{EQ20}) and  (\ref{EQ21}) do not imply $\int_M\lambda^6 dv_{\bar g}<\infty$ as $L^2$ space is not closed under multiplication.
\end{remark}

To finish the paper, we give a result that shows the vertical cylinders in $S^2\times \r$ admit biharmonic conformal immersions into $S^2\times \r$, a 3-manifold of nonconstant sectional curvature.
\begin{proposition}
Let $\alpha: I\longrightarrow (S^2, h)$ be (a part of) a circle in
$S^2$ with radius $1/k,\;\; (k>1)$ and let $\Sigma =\bigcup_{t\in
I}\pi^{-1}(\alpha(t))$ denote the vertical cylinder in $S^2\times
\r$. Then, the conformal immersion $\varphi:(\Sigma, {\bar
g}=\lambda^{-2}\varphi^{*}h)\longrightarrow (N^3,h)$ is biharmonic
if and only if $\lambda^2=\big(C_2e^{\pm z/R}-C_1C_2^{-1}R^2e^{\mp
z/R}\big)/2$, where $R=1/\sqrt{k^2-1}$.
\end{proposition}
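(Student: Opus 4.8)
The plan is to specialize Theorem \ref{MT10} to the ambient product $N^3=S^2\times\r$ and to the vertical cylinder $\Sigma$, so that the whole problem reduces to computing the shape operator of $\Sigma$, the Ricci curvature of the product in the normal direction, and then solving an ordinary differential equation for $\lambda$.

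First I would set up adapted coordinates: parametrize $\Sigma$ by the arclength $s$ along the circle $\alpha$ and the height $z\in\r$, so that a point of $\Sigma$ is $(\alpha(s),z)$. Since $\alpha$ is arclength-parametrized and the metric on $S^2\times\r$ is a product, the induced metric is the \emph{flat} cylinder metric $g=ds^2+dz^2$; in particular the Laplacian is $\Delta=\partial_s^2+\partial_z^2$ with no Christoffel corrections. The unit normal $\xi$ is horizontal, i.e. tangent to the $S^2$ factor and orthogonal to $\alpha'$. The two principal directions are $\partial_s$ and $\partial_z$: the vertical lines $\{\alpha(s)\}\times\r$ are geodesics of $S^2\times\r$, so the normal curvature in the $\partial_z$ direction is $0$, while the normal curvature in the $\partial_s$ direction equals the geodesic curvature of $\alpha$ in $S^2$, which for a circle of radius $1/k$ equals $k$. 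Hence the shape operator has eigenvalues $k$ and $0$, giving the \emph{constant} mean curvature $H=k/2$ and $|A|^2=k^2$.

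Next I would compute the ambient curvature terms from the product structure. For the Riemannian product $S^2\times\r$ the Ricci operator is the direct sum of the Ricci operators of the factors, so on the unit horizontal vector $\xi$ it acts as the identity of the unit sphere: $\mathrm{Ric}^N(\xi)=\xi$. Therefore $\mathrm{Ric}^N(\xi,\xi)=1$ and, since $\mathrm{Ric}^N(\xi)=\xi$ is purely normal to $\Sigma$, its tangential part $(\mathrm{Ric}^N(\xi))^{\top}=0$. Because $H$ is constant, $\mathrm{grad}\,H=0$ and $\Delta H=0$, so the two equations of Theorem \ref{MT10} collapse to $A(\mathrm{grad}\,\ln\lambda)=\tfrac12(\mathrm{Ric}^N(\xi))^{\top}=0$ and $\Delta\lambda^2=\lambda^2\,[\,|A|^2-\mathrm{Ric}^N(\xi,\xi)\,]=\lambda^2\,(k^2-1)=\lambda^2/R^2$.

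Finally I would solve this system. The first equation forces $\mathrm{grad}\,\ln\lambda$ into $\ker A$, which is the vertical direction $\partial_z$, so $\lambda$ depends only on $z$. Substituting into the second equation and using that the induced metric is flat turns it into the constant-coefficient ODE $(\lambda^2)''=\lambda^2/R^2$, whose general solution is $\lambda^2=C_+e^{z/R}+C_-e^{-z/R}$; renaming the two integration constants gives exactly the stated form $\lambda^2=\bigl(C_2e^{\pm z/R}-C_1C_2^{-1}R^2e^{\mp z/R}\bigr)/2$, with the constants constrained so that $\lambda^2>0$. The step requiring the most care — and the one most likely to hide a sign or normalization error — is the curvature bookkeeping in the non-space-form ambient $S^2\times\r$: one must correctly identify $\xi$ as horizontal, read off $\mathrm{Ric}^N(\xi,\xi)=1$ from the product structure, and confirm that the combination $|A|^2-\mathrm{Ric}^N(\xi,\xi)$ equals precisely $1/R^2$, since this is exactly what produces the real exponents and explains the hypothesis $k>1$. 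Everything else is the routine reduction of Theorem \ref{MT10} under the constant-$H$ assumption and the integration of a linear ODE.
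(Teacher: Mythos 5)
Your proposal is correct and takes essentially the same route as the paper: both specialize the biharmonic conformal immersion equation to the CMC case (Equation (\ref{ContH}), which is exactly your reduction of Theorem \ref{MT10} under constant $H$), compute $|A|^2=k^2$, ${\rm Ric}^N(\xi,\xi)=1$, $({\rm Ric}^N(\xi))^{\top}=0$ for the vertical cylinder, deduce $\lambda=\lambda(z)$ from $A({\rm grad\;ln}\,\lambda)=0$, and solve $(\lambda^2)''=\lambda^2/R^2$. The only cosmetic differences are that you derive the shape operator and Ricci terms geometrically from the product structure rather than citing the frame computations of \cite{Ou1}, and you write the general exponential solution directly instead of passing through the paper's first integral $y'^2=y^2/R^2+C_1$.
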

\begin{proof}
We know as in \cite{Ou1} that the cylinder has constant mean
curvature $H=k/2$, and an adapted orthonormal frame $\{ X, V, \xi\}$
with $\xi$ being normal to the cylinder. The shape operator  and the second fundamental form $b$ with
respect to the orthonormal frame are given by (see \cite{Ou1})
\begin{eqnarray}\notag
&& A(X)=-\langle \nabla_X \xi, X\rangle X-\langle \nabla_X \xi,
V\rangle V=k X,\\\notag && A(V)=-\langle \nabla_V \xi, X\rangle
X-\langle \nabla_V \xi, V\rangle V=-\tau X =0;\\\notag &&
b(X,X)=\langle A(X), X\rangle=k,\;\;b(X,V)=\langle A(X),
V\rangle=-\tau=0,\\\notag && b(V,X)=\langle A(V),
X\rangle=-\tau=0,\;\;b(V,V)=\langle A(V), V\rangle=0.
\end{eqnarray}
A further computation gives
\begin{eqnarray}\notag
&&H=\frac{1}{2}(b(X,X)+b(V,V))=\frac{k}{2},\\\notag && A({\rm
grad}\,H)=A(X(\frac{k}{2})X+V(\frac{k}{2})V)=X(\frac{k}{2})A(X)=\frac{k'}{2}(k
X -\tau V)=0;\\\notag && \Delta H=XX(H)-(\nabla_XX)
H+VV(H)-(\nabla_VV) H=\frac{k''}{2}=0;\\\notag &&
|A|^2=(b(X,X))^2+(b(X,V))^2+( b(V,X))^2+(b(V,V))^2=k^2.
\end{eqnarray}
We also have \cite{Ou1}
\begin{equation}\notag
\begin{cases}
{\rm Ric}(\xi,\xi)={\rm
Ric}(y'E_1-x'E_2,y'E_1-x'E_2)=y'^2+x'^2=1,\\{\rm Ric}(\xi,X)={\rm
Ric}(y'E_1-x'E_2,x'E_1+y'E_2)=x'y'-y'x'=0,\\{\rm Ric}(\xi,V)={\rm
Ric}(y'E_1-x'E_2,E_3)=0.
\end{cases}
\end{equation}
Substituting these into (\ref{ContH}) we have
\begin{equation}\label{CD20}
\begin{cases}
-(k^2-1)+2[\Delta {\rm ln}\lambda+2\left|{\rm
grad\,ln}\lambda\right|^2]=0,\\A({\rm grad\;ln}
\lambda)=0.\\
\end{cases}
\end{equation}
Noting that the cylinder can be parametrized by
$\phi(s,z)=(\alpha(s),z)\subset S^2\times \r$ and that $A({\rm
grad\;ln} \lambda)=X(\ln \lambda)=0$, where $X$ is tangent to
$\alpha$, we conclude that $\lambda(s, z)$ depends only on $z$. It
follows that Equation (\ref{CD20}) is equivalent to
\begin{equation}\notag
\begin{cases}
({\rm ln}\lambda)''+2({\rm ln}\lambda)'^2=(k^2-1)/2,\\
\lambda(s, z)=\lambda(z)
\end{cases}
\end{equation}
or,
\begin{equation}\notag
(\lambda^2)''=\frac{1}{R^2}\lambda^2,
\end{equation}
where, $R=1/\sqrt{k^2-1}$. It follows that $\lambda^2$ is a solution
of the ordinary differential equation
\begin{equation}\notag
y''=\frac{1}{R^2}y,
\end{equation}
which has (see e.g., \cite{Cu}) the first integral
\begin{equation}\label{GD10}
y'^2=y^2/R^2+C_1.
\end{equation}
Solving Equation (\ref{GD10}) we have
\begin{equation}\notag
y=\big(C_2e^{\pm z/R}-C_1C_2^{-1}R^2e^{\mp z/R}\big)/2.
\end{equation}
From this we have
\begin{equation}\notag
\lambda^2=\big(C_2e^{\pm \sqrt{k^2-1}\;z}-C_1[C_2(k^2-1)]^{-1}e^{\mp
\sqrt{k^2-1}\;z}\big)/2.
\end{equation}
Thus, we obtain the proposition.
\end{proof}

\end{document}